\def\BibTeX{{\rm B\kern-.05em{\sc i\kern-.025em b}\kern-.08em
    T\kern-.1667em\lower.7ex\hbox{E}\kern-.125emX}}
\newtheorem{theorem}{Theorem}
\newtheorem{lemma}{Lemma}
\newtheorem{corollary}{Corollary}
\newtheorem{assumption}{Assumption}
\newcommand{\tx}{\tilde{x}}
\newcommand{\td}{\mathrm{d}}
\title{Operator splitting based diffusion samplers and improved convergence analysis}
\author{Peiyi Liu, Zhaoqiang Liu and Yiqi Gu
\thanks{P. Liu and Y. Gu are supported by National Natural Science Foundation of China Major Research Plan (92370101).

The authors are with the School of Mathematical Sciences (P. Liu, Z. Liu and Y. Gu) and the School of Computer Science and Engineering (Z. Liu), University of Electronic Science and Technology of China, Sichuan 611731, China (email: peiyiliu@std.uestc.edu.cn; yiqigu@uestc.edu.cn; zqliu12@gmail.com).}
}
\begin{document}

\maketitle

\begin{abstract}
In this paper, we develop a class of samplers for the diffusion model using the operator-splitting technique. The linear drift term and the nonlinear score-driven drift of the probability flow ordinary differential equation are split and applied by flow maps alternatively. Moreover, we conduct detailed analyses for the second-order sampler, establishing a non-asymptotic total variation distance error bound of order $O(d/T^2+\sqrt{d}\varepsilon_{\mathrm{score}}+d\varepsilon_{\mathrm{Jac}})$, where $d$ is the data dimension; $T$ is the number of sampling steps; $\varepsilon_{\mathrm{score}}$ and $\varepsilon_{\mathrm{Jac}}$ measure the discrepancy between the actual score function and learned score function. Our bound is sharper than existing works, yielding bounds of $O(d^p/T^2)$ with some $p>1$ for specific second-order samplers. Numerical experiments on a two-dimensional synthetic dataset corroborate the established quadratic dependence on the step size $1/T$ in the error bound.
\end{abstract}
\begin{IEEEkeywords}
diffusion models, probability flow ODE, operator splitting, sampling, TV distance
\end{IEEEkeywords}

\section{Introduction}
\IEEEPARstart{D}{iffusion} models have emerged as a central 
paradigm in modern machine learning, achieving state-of-the-art performance in image, audio and other modalities generation. Early works formulate generation as the reversal of a fixed Markov noising process and demonstrate that such models can match or surpass generative adversarial networks (GANs) in image quality while being substantially more stable to train \cite{Sohl2015,Ho2020,Dhariwal2021,Rombach2022}. Score-based generative modeling via stochastic differential equations (SDEs) further unifies discrete-time diffusion models with continuous-time stochastic sampling, and has led to a mature theoretical and algorithmic framework for diffusion-based learning and generation \cite{Song2021,Croitoru2023,Yang2023,Tang2025}.

A major bottleneck for practical deployment is sampling cost: high-fidelity generation typically requires hundreds or even thousands of function evaluations along the reverse-time trajectory. This has triggered a large body of work on training-free acceleration of diffusion sampling, either by designing specialised high-order ordinary differential equation (ODE) solvers for the probability flow ODE (PF-ODE) \cite{Song2021}, or by constructing carefully tuned discretizations of the SDE counterpart. On the ODE side, DPM-Solver and its multi-step variants provide tailored, high-order solvers that can generate high-quality samples within tens of neural function evaluations (NFEs) \cite{Lu2022,Zhang2023,Zhao2023,Xue2023}. In parallel, a complementary line of work pursues distillation or consistency-type models to amortise long trajectories into a few steps \cite{Salimans2022,Song2023}. These methods offer impressive empirical gains, but their design is often heuristic, and their convergence properties remain only partially understood, especially in high dimensions.

As a result, there has been a rapidly growing interest in rigorous convergence theory for diffusion models. A first wave of results establishes polynomial-time guarantees for DDPM or SDE-based samplers under various structural conditions on data distribution and score approximation error. Chen, Chewi et al. \cite{Chen2023_1} improve the dependence on the score error and weaken smoothness assumptions. These works typically control either the Wasserstein-2 distance or total variation (TV) distance between the generated and target distributions, and make explicit the trade-off between step size, score accuracy and data dimension. Related deterministic analyses for flow matching and ODE-type samplers \cite{Benton2024} further clarify how numerical integration error propagates to distributional error, but are not directly tailored to the specific PF-ODE used in diffusion models.

More recently, several papers have focused specifically on probability flow ODE samplers. Li et al. \cite{Li2023} develop non-asymptotic convergence guarantees for both DDPM and PF-ODE based samplers, and introduce accelerated variants that can reach $O(1/T^2)$ convergence in the number of steps $T$. Li et al. \cite{Li2024_1} provide a sharp convergence theory for PF-ODEs that refines the dependence on the score error and time horizon; Gao and Zhu \cite{Gao2024} obtain non-asymptotic convergence rates for a broad class of PF-ODE samplers in Wasserstein distances under log-concavity; and Huang et al. \cite{Huang2025} establish convergence guarantees under related structural conditions. Parallel to these PF-ODE-focused results, there is a growing literature on provably accelerating diffusion models via higher-order or adaptive schemes \cite{Li2024_2,Wu2024}, which often draw inspiration from classical high-order SDE or ODE integrators.

Among these, the works \cite{Li2024_3} and \cite{Li2025} are particularly relevant. Both works design training-free, accelerated samplers whose construction mirrors high-order numerical integrators, and derive non-asymptotic convergence guarantees for DDIM and PF-ODE-style methods under mild assumptions on the data distribution and score error. In particular, Li et al. prove the $O(d^6/T^2)$ error bound for a specific second-order sampler \cite{Li2024_3}. After that, they prove $O(\frac{d^{2+K}}{T^K})$ error bounds for a general type of $K$-th order samplers \cite{Li2025}. Therefore, for a target accuracy $\varepsilon$ in TV distance, the $K$-th order sampler requires on the order of $d^{1+2/K}\varepsilon^{-1/K}$ score evaluations. This work provides a general high-order framework, but the dependence on dimension $d$ is super-linear, and the analysis exploits a rather involved high-order Lagrange interpolation of the PF-ODE integral.

In this paper, we develop a new class of samplers for PF-ODEs based on operator splitting, along with a detailed TV distance analysis that yields improved dimension dependence under standard assumptions. The analysis framework differs markedly from existing PF-ODE sampler theories. Starting from the usual VP-type forward SDE, we view the PF-ODE as the sum of a linear drift and a nonlinear score-driven drift. Specifically, we consider the semi-linear ODE
$$\frac{\td x(t)}{\td t}=A(t)x(t)+B(t,x(t)),$$
with $A(t):[0,1]\xrightarrow{}\mathbb{R}$ and $B(t,x(t)): [0,1]\times \mathbb{R}^d\xrightarrow{}\mathbb{R}^d$. We then compose the two flow maps via operator splitting to obtain a full PF-ODE update. Let $S_B(\Delta t)$ denote the flow maps generated by $B$ over time step $\Delta t$. Then the operator splitting method is formally given by
\begin{equation*}
\Phi_h=e^{a_1\Delta tA}S_B(b_1\Delta t)e^{a_2\Delta tA}S_B(b_2\Delta t)\dots \dots e^{a_K\Delta tA}S_B(b_K\Delta t),
\end{equation*}
where $a_i,b_i$ are constants satisfying $\sum_{i=1}^K a_i=1$, $\sum_{i=1}^K b_i=1$ and $K\in \mathbb{N}^+$. Especially, this numerical method is called Strang splitting when $K=2$. This leads to a practically simple, training-free algorithm that only requires access to the learned score field. From a numerical analysis perspective, our scheme is closely related to classical operator splitting methods \cite{Marchuk1990,Yanenko1971,Hundsdorfer2013}, but it is tailored to the specific structure of the PF-ODE in the diffusion model.

Our main theoretical contribution is a global TV distance error bound for the proposed Strang-splitting sampler, under standard Lipschitz and linear-growth assumptions on the score and its Jacobian, and mild moment assumptions on data distribution. The bound has three components: A discretization term of order $O(\frac{d}{T^2})$; a term of order $O(\sqrt{d}\varepsilon_{\mathrm{score}})$ controlled by the $L^2$ score estimation error; and a Jacobian-related term that is linear in $d$. Consequently, our error bound of the sampler is sharper than the bound $O(\frac{d^4}{T^2})$ proved in \cite{Li2025}, which we reduce the dependence on the dimension $d$ from quartic to linear. To reach a prescribed TV accuracy $\varepsilon$ in the idealised setting of accurate scores, it suffices to choose $h\sim O(\sqrt{\varepsilon/d})$, implying that the number of NFEs scales as $O(\sqrt{d/\varepsilon})$. This is the best-known dependence on $\varepsilon$ for second-order schemes while yielding a significantly milder dependence on the dimension $d$ than the $d^2$ factor appearing in the second-order scheme of Li et al. \cite{Li2025}. At the same time, our analysis remains robust to inexact scores and requires no assumptions about the target distribution or structural low dimensionality.

Beyond establishing improved TV bounds, our proofs introduce several technical ingredients that may be of independent interest. We show that the discretization correction induced by operator splitting behaves like a ``reasonable" drift field, with controlled growth and divergence, which allows us to turn the formal $h^2$ local error into a genuine $O(h^2)$ contribution in TV distance at the distributional level. We also develop a TV-based stability estimate for PF-ODE flows under perturbations of both the drift and its Jacobian, which cleanly separates the impact of discretization from that of score approximation. Finally, we complement the theory with experiments on two-dimensional synthetic data, where we can compute the TV distance explicitly: These experiments confirm the predicted $O(h^2)$ scaling and demonstrate that our Strang-based sampler matches the theoretical behaviour even when the score is learned by a neural network of finite hidden layers and width.

Overall, this work contributes to the emerging numerical analysis theory of diffusion sampling: It connects classical operator-splitting schemes with modern PF-ODE samplers, provides dimension-aware TV bounds under widely used regularity assumptions, and offers a theoretically grounded alternative to more intricate high-order constructions. We hope that the techniques developed here will also be useful for analysing other composition type samplers and for designing further higher-order, yet theoretically tractable, numerical algorithms for diffusion generative modeling.

\section{Diffusion Probabilistic Models}
Here, we briefly recap the diffusion probabilistic models and the associated ODE we ultimately need to solve. A diffusion model starts from an unknown data distribution $q(x,0)$, the probability density function (PDF) of the true data. Here, the second variable of $q$ being zero means the initial time $t=0$. Let $x_0 \in\mathbb{R}^d$ be a $d$-dimensional sample from $q(x,0)$, conditional on which a forward noising process can be defined following a Gaussian
\begin{equation}\label{forward}
x_t|x_0\sim\mathcal{N}(\alpha(t)x_0,\sigma^2(t)I),
\end{equation}
where $\alpha(t),\sigma(t) \ge 0$ are differentiable ``noise schedules" and $t\in[0,1]$. Let $q(x,t)$ be the PDF of $x_t$. Note that $q(x,t)$ is differentiable in $x$ and $t$.


Kingma et al. \cite{Kingma2021} proved that the following stochastic differential
equation (SDE) has the same transition distribution as in \eqref{forward}:
\begin{equation}\label{01}
\td x_t=f(t)x_t\td t+g(t)\td w_t, \quad x_0\sim q(x,0),\quad t\in[0,1],
\end{equation}
where $w_t\in R^d$ is the standard Wiener process, and
\begin{equation*}
f(t)=\frac{\td(\log\alpha(t))}{\td t}, \quad
g^2(t)=\frac{\td \sigma^2(t)}{\td t}-2\frac{\td\log \alpha(t)}{\td t}\sigma^2(t).
\end{equation*}
Additionally, Song et al. \cite{Song2021} show that the forward SDE \eqref{01} admits a reverse-time dynamics that transports $x_1\sim q(x,1)$ back to the data distribution:
\begin{equation*}
\td x_t=\bigl(f(t)x_t-g^2(t)\nabla_x\log q(x,t)\bigr)\td t+g(t)d\bar{w_t}, \quad x_1\sim q(x,1),
\end{equation*}
where $\bar{w_t}$ is a reverse-time Wiener process and the function $\nabla_x\log q(x(t),t)$ is called the score function \cite{Song2021}. The inherent randomness of the Wiener process limits the step size during SDE discretization, often leading to non-convergence with large steps. To enable faster sampling, the probability flow ODE (PF-ODE), which preserves the same marginal distributions, can be used instead. Song et al. \cite{Song2021} proved that such an PF-ODE is formulated as
\begin{equation}\label{02}
\frac{\td x_t}{\td t}=f(t)x_t-\frac{1}{2}g^2(t)\nabla_{x}\log q(x,t),\quad x_1\sim q(x,1),
\end{equation}
where the marginal distribution of $x_t$ is exactly $q(x,t)$.

In practical diffusion models, one usually uses a neural network $\epsilon_{\theta}(x_t,t)$ to predict the Gaussian noise added to $x_t$ during the forward process, with $\epsilon_{\theta}(x_t,t)\approx-\sigma(t)\nabla_{x}\log q(x,t)$. Then \eqref{02} can be written as
\begin{equation}\label{PFODE}
\frac{\td x_t}{\td t}=h_\theta(x_t,t) := f(t)x_t + \frac{g^2(t)}{2\sigma(t)}\epsilon_{\theta}(x_t,t),\quad
x_1\sim \mathcal{N}(0,I).
\end{equation}
Samples from the diffusion model can be generated by solving the ODE backward from $t=1$ to $t=0$. 

Compared to stochastic SDE-based sampling, deterministic ODE sampling is both more numerically efficient and more amenable to analysis. The reverse-time probability-flow ODE yields a smooth, noise-free trajectory, allowing standard ODE solvers to take larger step sizes without sacrificing stability or sample quality, thus reducing the number of NFEs per sample. At the same time, the absence of stochastic perturbations makes trajectories easier to invert, interpolate, and manipulate, which is advantageous for tasks such as latent inversion, controlled editing, and theoretical analysis of the generative dynamics \cite{Lu2022}.

\section{Formulation of Samplers}\label{seC_2}
In this section, we propose a framework for constructing samplers of the diffusion model via operator splitting. We primarily discuss the formulations of the second, third, and fourth-order samplers. The framework can be easily generalized to any higher orders.

\subsection{Principle of Operator Splitting}
Operator splitting is a numerical strategy for time-dependent evolution equations in which a complex problem is decomposed into simpler subproblems that are advanced sequentially in time. Consider an abstract evolution equation with $u_0$ as the initial value,
\begin{equation}\label{04}
\frac{\td u}{\td t}=Au+B(u),\quad t\in[0,1],
\end{equation}
where $u:[0,1]\xrightarrow{}\mathbb{R}^d$; $A$ is a linear operator, and $B(\cdot)$ is a general (maybe nonlinear) operator.

Let $S_A(\Delta t)$ and $S_B(\Delta t)$ denote the flow maps generated by $A$ and $B$ over time step $\Delta t$, respectively. Namely, 
\begin{equation}\label{05}
\frac{\td u}{\td t}=Au
\end{equation}
and
\begin{equation}\label{06}
\frac{\td u}{\td t}=B(u)
\end{equation}
have solutions $u(\Delta t)=S_A(\Delta t)u_0=e^{\Delta t A}u_0$ and $u(\Delta t)=S_B(\Delta t)u_0$ whenever they are well-defined. 

Given $u(t^*)$, the solution evaluation from $t=t^*$ to $t=t^*+\Delta t$ can formally computed by 
\begin{equation*}
u(t^*+\Delta t)=S_{A+B}(\Delta t)u(t^*),
\end{equation*}
where $S_{A+B}$ denotes the flow maps of the full equation. In general, implementing the flow $S_{A+B}(\Delta t)$ directly is difficult. The core idea of operator splitting is to split $S_{A+B}$ into $S_A$ and $S_B$, which are implemented more simply. Common operator splitting schemes include\cite{Blanes2024}:\vspace{6pt}\\
(1) {\em Lie Splitting}\vspace{2pt}\\
In first-order Lie Splitting, one step of solution evaluation from $t=t^*$ to $t=t^*+\Delta t$ is approximated by
\begin{equation*}
u(t^*+\Delta t)\approx S_B(\Delta t)e^{\Delta t A}u(t^*),
\end{equation*}
which is a first-order scheme in $\Delta t$ under suitable regularity and bounds.\vspace{6pt}\\
(2) {\em Strang Splitting}\vspace{2pt}\\
To achieve second-order accuracy in time, one commonly uses Strang splitting. A single time step is defined by
\begin{equation*}
u(t^*+\Delta t)\approx e^{\frac{\Delta t} 2 A}S_B(\Delta t)e^{\frac{\Delta t} 2 A}u(t^*).
\end{equation*}
Under appropriate assumptions, this construction yields a second-order approximation in time while preserving the modular structure of the algorithm.

We remark that the time step $\Delta t$ can be either positive or negative. If $\Delta t>0$, the solution is computed for forward time marching. Otherwise, the solution is computed for backward time marching. 

\subsection{Samplers based on operator splitting} 
Under the framework of operator splitting, the PF-ODE~\eqref{PFODE} satisfies $A(t)=f(t)$, $B(t,x)=\frac{g^2(t)}{2\sigma(t)}\epsilon_{\theta}(x,t)$. Note that we need to solve \eqref{PFODE} backward from $t=1$ to $t=0$. For convenience, we introduce the following notation: $T\in \mathbb{N}^+$ is the number of time steps; $h=\frac{1}{T}$ is the step size; $t_n=nh$ ($n=0,\dots,T$) are equidistant time steps on $[0,1]$; $\tx_n:=x_{t_n}$ for $n=0,\dots,T$. 

\subsubsection{A second-order scheme}

We first propose a sampler such that the total variation (TV) distance between the distribution of the generated samples and the true data distribution decays at a second-order rate in $h$. For the PF-ODE, suppose $\tx_n$ is obtained, the second-order scheme employs Strang splitting, i.e.  
\begin{equation}\label{07}
\tx_{n-1}=e^{-\frac{h} 2 A}S_B(-h)e^{-\frac{h} 2 A}\tx_n.
\end{equation}

To match the second-order rate, one can adopt a second-order ODE solver to approximate the nonlinear flow $S_B(h)$. In this work, we employ the explicit second-order Runge-Kutta (RK2) \cite{Hairer1993} backward-in-time scheme. Therefore, the specific procedures for implementing \eqref{07} are as follows. First, we take a half time step along the linear flow $e^{\frac{h} 2 A}$, solving the linear ODE \eqref{05} from $t_n$ to $t_{n-\frac{1}{2}}$, with $\tx_n$ as the initial value, we have the solution
\begin{equation*}
\tx_n^*=\tx_n \exp \left(\int_{t_n}^{t_{n-\frac{1}{2}}}f(s)\td s\right).
\end{equation*}
Second, we take a full-time step along the nonlinear flow $S_B(h)$, The RK2 scheme to advance the solution of the nonlinear ODE \eqref{06} from $t_n$ to $t_{n-1}$, with $\tx_n^*$ as the initial value, gives the following scheme:
\begin{align*}
& k_1=-\frac{1}{2}\frac{g^2(t_n)}{\sigma(t_n)}\epsilon_\theta(\tx_n^*,t_n), \\
& k_2=-\frac{1}{2}\frac{g^2(t_{n-\frac{1}{2}})}{\sigma(t_{n-\frac{1}{2}})}\epsilon_\theta(\tx_n^*+\frac{h}{2}k_1,t_{n-\frac{1}{2}}), \\
& \tx_n^{**}= \tx_n^*+hk_2.
\end{align*}
Third, we take a half time step along the linear flow $e^{\frac{h}{2}A}$, solving the linear ODE \eqref{05} from $t_{n-\frac{1}{2}}$ to $t_{n-1}$, with $\tx_n^{**}$ as the initial value. Namely,
\begin{equation*}
\tx_{n-1}=\tx_n^{**}\exp \left(\int_{t_{n-\frac{1}{2}}}^{t_{n-1}}f(s)\td s\right).
\end{equation*}

To sum up, the second-order sampler scheme from $\tx_T$ to $\tx_0$ can be written as: 
\begin{equation}\label{sampler}
\begin{split}
& \tx_n^*=\tx_n \exp \left(\int_{t_n}^{t_{n-\frac{1}{2}}}f(s)\td s\right), \\
& k_1=-B(t_n,\tx_n^*), \\
& k_2=-B(t_{n-\frac{1}{2}},\tx_n^*+\frac{h}{2}k_1), \\
& \tx_n^{**}=\tx_n^*+hk_2,\\
& \tx_{n-1}=\tx_n^{**}\exp \left(\int_{t_{n-\frac{1}{2}}}^{t_{n-1}}f(s)\td s\right),
\end{split}
\end{equation}
for $n=T,T-1,\dots,1$.

\subsubsection{Higher-order schemes}
The framework proposed above can be extended to construct higher-order samplers. In practical applications, samplers of different orders can be selected based on the task's difficulty. Specifically, given $K\in\mathbb{Z}^+$, we can construct the $K$-th order sampler. First, we take a $K$-th order operator splitting scheme, i.e. 
\begin{equation*}
\tx_{n-1}=S_B(-b_Kh)e^{-a_K h A}\cdots S_B(-b_2 h t)e^{-a_2 h A} S_B(-b_1 h t)e^{-a_1 h A}\tx_n,
\end{equation*}
where $a_i,b_i$ are certain constants with $\sum a_i=\sum b_i=1$. Next, to implement the flow $S_B(\cdot)$, we choose a $K$-th order explicit Runge-Kutta process $\mathrm{RK}:\mathbb{R}^d \times [0,1]\times \mathbb{R}^+\xrightarrow{}\mathbb{R}^d$, $\mathrm{RK}_s(x,t,h)=x+h\sum_{i=1}^sb_ik_i$, with
\begin{align*}
& k_1=-B(t,x)\\
& k_2=-B(t+C_1h,x+a_{21}hk_1)\\
& \qquad \vdots\\
& k_s=-B(t+c_sh,x+a_{s1}hk_1+a_{s2}hk_2+\cdots+a_{s,s-1}hk_{s-1}),
\end{align*}
where $a_i,b_i,c_i$ are specific coefficients and $s$ is the number of stages. Then, the $K$-th order sampler is formulated as
\begin{align*}
&x^{(0)} = \tx_n,\\
&y^{(m)}=x^{(m)} \exp \left(\int_{t_{n-\sum_{i=1}^m a_i}}^{t_{n-\sum_{i=1}^{m+1} a_i}}f(s)\td s\right),\\
&x^{(m+1)}=\mathrm{RK}(y^{(m)},t_{n-\sum_{i=1}^m b_i},b_{m+1}h),\\
&m=0,1,\dots,s-1,\\
&\tx_{n-1} = x^{(s)},
\end{align*}
for $n=T,T-1,\dots,1$.  The construction of the above schemes is flexible. One can use different constants in high-order operator splitting schemes, and adopt ODE solvers other than Runge-Kutta methods for the nonlinear part \eqref{06}. 

\section{Error Analysis}
In this section, we conduct an error analysis of the second-order sampler introduced in Section III. Our goal is to show that, under mild regularity assumptions, the scheme has a global error of $O(h^2)$, where $h>0$ denotes the time step size. Throughout the following, $\|\cdot\|$ denotes the Euclidean $l_2$ norm on $\mathbb{R}^d$. We use the notation $E=O(h^k)$ to mean that $|E|\leq Ch^k$ for some constant $C>0$ and some power $k>0$ which are independent of $h$. When applied to vectors, $O(h^k)$ is understood measured in norm $\|\cdot\|$. 

\subsection{Setting and assumptions}
For simplicity of notations, we let $b(t,x(t)):=-\frac{1}{2}g^2(t)\nabla_x\log q(x,t)$. Then the ODE in \eqref{02} can be written as
\begin{equation}\label{08}
x'(t)=f(t)x(t)+b(t,x(t)),\quad t\in [0,1].
\end{equation}

We impose the following regularity and Lipschitz conditions.
\begin{assumption}\label{ass:01}
The scalar function $f,g\in C^2([0,1])$. The function $q(x,t)$ is twice continuously differentiable in $t$ and three times continuously differentiable in $x$ on $[0,1]\times\mathbb{R}^d$, with bounded derivatives up to the third order. Moreover, there exists a constant $L>0$ such that
\begin{align*}
\|b(t,x)-b(t,y)\|\le L\|x-y \|,
\end{align*}
for all $t\in[0,1]$ and $x,y\in \mathbb{R}^d$.
\end{assumption}

Note that the Lipschitzness of the score function in Assumption \ref{ass:01} is standard and has been used in prior works \cite{Lee2022}. 

As a first step, we recall the standard variation of constant representation for the solution of \eqref{08}, which we will repeatedly use to compare the exact evolution $x(\cdot)$ with the numerical flow. For any $s,t\in[0,1]$, we set 
\begin{align*}
m_f(t,s):=\exp \left(\int_s^t f(\tau) \td \tau  \right).
\end{align*}

\begin{lemma}\label{lemma:01}
Let $x(\cdot)$ solve \eqref{08}. For any $0\le t_{n-1}<t_n\le1$, 
\begin{equation}\label{09}
x(t_{n-1})=m_f(t_{n-1},t_n)x(t_n)+ \int_{t_n}^{t_{n-1}}m_f(t_{n-1},s)b(s,x(s))\td s.
\end{equation}
\end{lemma}

\begin{proof}
Let $\mu(t):=\exp \left(-\int_{t_n}^t a(\tau)d\tau\right)$, and thus $\mu'(t)=-f(t)\mu(t)$.
Multiplying \eqref{08} by $\mu(t)$ gives
\begin{align*}
\frac{d}{\td t}[\mu(t)x(t)]
&=\mu'(t)x(t)+\mu(t)x'(t) \\
&=-f(t)\mu(t) x(t)+ f(t)\mu(t) x(t)+\mu(t)b(t,x(t))\\
&=\mu(t)b(t,x(t)).
\end{align*}
Integrate from $t_n$ to $t$, we have
\begin{equation*}
\mu(t)x(t)-\mu(t_n)x(t_n)=\int_{t_n}^{t}\mu(s)b(s,x(s))\td s.
\end{equation*}
Then we divide the above equation by $\mu(t)$ and note $m_f(t,s)=\mu(s)/\mu(t)$. Setting $t=t_{n-1}$ yields \eqref{09}.
\end{proof}

\subsection{Numerical scheme}
We now consider the second-order sampler \eqref{sampler}. In the scheme \eqref{sampler}, we define the auxiliary point
\begin{equation}\label{03}
p:=\tx_n^*+\tfrac{h}{2}k_1=m_f(t_{n-\frac{1}{2}},t_n)\tx_n +\frac{h}{2}b(t_n,m_f(t_{n-\frac{1}{2}},t_n)\tx_n).
\end{equation}
A simple calculation shows that \eqref{sampler} can be rewritten as
\begin{equation}\label{10}
\tx_{n-1}=m_f(t_{n-1},t_n)\tx_n + hm_f\left(t_{n-1},t_{n-\frac12}\right) b\left(t_{n-\frac12},p\right).
\end{equation}
We denote this numerical flow in \eqref{10} by 
\begin{equation}\label{27}
\tx_{n-1}=\Psi_h(t_n,\tx_n).
\end{equation}
The rest of the section compares $\Psi_h$ with the exact flow in Lemma \ref{lemma:01}.

\subsection{Local truncation error}\label{seC_3_3}
Now we estimate the local truncation error at time $t_{n-1}$ by assuming that the numerical solution at time $t_n$ is accurate. Therefore, in this subsection, we always take $\tx_n = x(t_n)$ in the definition of $p$. 

We begin with the usual notion of local truncation error, defined by applying one numerical step to the exact solution at time $t_n$. The local truncation error is 
\begin{equation*}
\tau_{n-1}:=x(t_{n-1})-\Psi_h(t_n,x(t_n)).
\end{equation*}

To estimate $\tau_{n-1}$, letting 
\begin{equation*}
G(s):=m_f(t_{n-1},s)b(s,x(s)), \quad s\in[t_{n-1},t_n],
\end{equation*}
which represents the integrand in the variation of constants formula \eqref{09}. The next lemma expresses the local error as the sum of a quadrature error for the midpoint rule applied to $G$ and a consistency term coming from the approximation of the midpoint value.
\begin{lemma} \label{lemma:02}
Under Assumption \ref{ass:01},
\begin{equation*}
\tau_{n-1}=\Big(\int_{t_n}^{t_{n-1}} G(s)\td s - hG(t_{n-\frac12})\Big) + hm_f\left(t_{n-1},t_{n-\frac12}\right)\cdot\Big(b(t_{n-\frac12},x(t_{n-\frac12}))-b(t_{n-\frac12},p)\Big),
\end{equation*}
where in the definition of $p$ we takes $\tx_n = x(t_n)$.
\end{lemma}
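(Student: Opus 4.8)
The plan is to prove the identity by a direct algebraic manipulation, with no estimation at all: the claimed equality is exact, so it will follow by combining the variation-of-constants representation of Lemma~\ref{lemma:01} with the closed form~\eqref{10} of the numerical flow $\Psi_h$ and then inserting the ``ideal'' midpoint sample.

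Concretely, I would first expand $\tau_{n-1}=x(t_{n-1})-\Psi_h(t_n,x(t_n))$ using the two available expressions. By~\eqref{09}, and recalling $G(s)=m_f(t_{n-1},s)b(s,x(s))$,
\[
x(t_{n-1})=m_f(t_{n-1},t_n)x(t_n)+\int_{t_n}^{t_{n-1}}G(s)\,\td s,
\]
while~\eqref{10}, with the convention $\tx_n=x(t_n)$ in the definition~\eqref{03} of $p$, gives
\[
\Psi_h(t_n,x(t_n))=m_f(t_{n-1},t_n)x(t_n)+h\,m_f\!\left(t_{n-1},t_{n-\frac12}\right)b\!\left(t_{n-\frac12},p\right).
\]
Subtracting, the linear drift contributions $m_f(t_{n-1},t_n)x(t_n)$ cancel identically, so
\[
\tau_{n-1}=\int_{t_n}^{t_{n-1}}G(s)\,\td s-h\,m_f\!\left(t_{n-1},t_{n-\frac12}\right)b\!\left(t_{n-\frac12},p\right).
\]
Then I would add and subtract the exact midpoint value $h\,G(t_{n-\frac12})=h\,m_f(t_{n-1},t_{n-\frac12})\,b(t_{n-\frac12},x(t_{n-\frac12}))$, which by definition of $G$ requires no extra hypotheses. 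This regroups $\tau_{n-1}$ as $\big(\int_{t_n}^{t_{n-1}}G(s)\,\td s-hG(t_{n-\frac12})\big)$ --- the midpoint-rule quadrature error for the integrand $G$ on $[t_{n-1},t_n]$ --- plus $h\,m_f(t_{n-1},t_{n-\frac12})\big(b(t_{n-\frac12},x(t_{n-\frac12}))-b(t_{n-\frac12},p)\big)$, the consistency term capturing the replacement of the true midpoint state by the RK2 stage value. This is precisely the asserted decomposition.

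Since each step is an exact identity, there is no substantive obstacle; the only care needed is bookkeeping of the $m_f$ factors and keeping the convention $\tx_n=x(t_n)$ consistent throughout~\eqref{03}, so that both summands are genuinely evaluated along the exact trajectory $x(\cdot)$. The real work --- Taylor-expanding the quadrature error to order $h^3$ and applying the Lipschitz bound from Assumption~\ref{ass:01} to $b(t_{n-\frac12},x(t_{n-\frac12}))-b(t_{n-\frac12},p)$ together with an $O(h^2)$ bound on $\|x(t_{n-\frac12})-p\|$ --- is what the following lemmas will carry out, and this decomposition is the scaffolding that cleanly separates those two sources of error.
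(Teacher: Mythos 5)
Your proposal is correct and follows essentially the same route as the paper's proof: apply Lemma~\ref{lemma:01} to express $x(t_{n-1})$, subtract the numerical step~\eqref{10} with $\tx_n = x(t_n)$ so the $m_f(t_{n-1},t_n)x(t_n)$ terms cancel, then add and subtract $hG(t_{n-\frac12}) = hm_f(t_{n-1},t_{n-\frac12})b(t_{n-\frac12},x(t_{n-\frac12}))$ to obtain the stated decomposition. Nothing further is needed.
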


\begin{proof}
By lemma \ref{lemma:01},
\begin{align*}
x(t_{n-1})=m_f(t_{n-1},t_n)x(t_n)+\int_{t_n}^{t_{n-1}}G(s)\td s.
\end{align*}
Subtracting the numerical evolution \eqref{10} with $\tx_n = x(t_n)$ gives 
\begin{equation*}
\tau_{n-1} =x(t_{n-1})-\tx_{n-1}
= \int_{t_n}^{t_{n-1}} G(s)\td s
- hm_f\left(t_{n-1},t_{n-\frac12}\right) b\left(t_{n-\frac12},p\right).
\end{equation*}
Adding and subtracting 
\begin{align*}
hm_f(t_{n-1},t_{n-\frac12})b\left(t_{n-\frac12},x(t_{n-\frac12})\right)
\end{align*}
yields the claimed expression.
\end{proof}

The first term in Lemma \ref{lemma:02} is the classical midpoint quadrature error, which is of order $O(h^3)$ under our smoothness assumptions.
\begin{lemma} \label{lemma:03}
Under Assumption \ref{ass:01}, the function $G(s)$ is twice continuously differentiable on $[t_{n-1},t_n]$, and there exists a constant $C>0$, independent of $h$, such that 
\begin{align*}
\left\|\int_{t_n}^{t_{n-1}} G(s)\td s - hG(t_{n-\frac12})\right\| \le Ch^3.
\end{align*}
\end{lemma}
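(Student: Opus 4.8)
The plan is to establish this as the standard error estimate for the midpoint quadrature rule, so the first task is to verify the required smoothness of $G$ and then apply a Taylor expansion around the midpoint $t_{n-\frac12}$. For the regularity: $G(s)=m_f(t_{n-1},s)b(s,x(s))$, and $m_f(t_{n-1},s)=\exp(\int_s^{t_{n-1}}f(\tau)\,\td\tau)$ is $C^3$ in $s$ because $f\in C^2([0,1])$ by Assumption \ref{ass:01}. The map $s\mapsto b(s,x(s))$ is $C^2$ because $b(t,x)=-\tfrac12 g^2(t)\nabla_x\log q(x,t)$ inherits enough smoothness: $g\in C^2$, $q$ is twice continuously differentiable in $t$ and three times in $x$ with bounded derivatives, and $x(\cdot)$ itself is $C^1$ (indeed $C^2$) as the solution of the ODE \eqref{08} with a $C^1$ right-hand side. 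Hence $G\in C^2([t_{n-1},t_n])$, and crucially $\|G''\|$ is bounded on $[t_{n-1},t_n]$ by a constant depending only on the data of the problem (the bounds on $f,g$ and the derivatives of $q$, and an a priori bound on $x(\cdot)$ on $[0,1]$), not on $h$ or $n$.

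Next I would carry out the classical midpoint-rule argument. Write $m:=t_{n-\frac12}$, so the interval is $[m-\tfrac h2,m+\tfrac h2]$ (up to orientation; since we integrate backward from $t_n$ to $t_{n-1}$, the signed length is $-h$, and one keeps track of signs but the estimate is unaffected). Taylor-expand $G(s)=G(m)+G'(m)(s-m)+\tfrac12 G''(\xi_s)(s-m)^2$ for some $\xi_s$ between $s$ and $m$. Integrating over the symmetric interval, the constant term gives exactly $-hG(m)=hG(t_{n-\frac12})$ with the appropriate sign, the linear term integrates to zero by symmetry of $(s-m)$ about $m$, and the remainder is bounded by $\tfrac12\sup_{[t_{n-1},t_n]}\|G''\|\cdot\int |s-m|^2\,\td s = \tfrac12\|G''\|_\infty\cdot\tfrac{h^3}{12}$. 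Collecting, $\|\int_{t_n}^{t_{n-1}}G(s)\,\td s - hG(t_{n-\frac12})\|\le \tfrac{1}{24}\|G''\|_\infty h^3 =: Ch^3$, with $C$ independent of $h$ as required.

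The only genuine subtlety — the step I expect to be the main obstacle, though it is more bookkeeping than difficulty — is making the bound on $\|G''\|_\infty$ genuinely uniform in $n$ and $h$. This requires an a priori bound on the exact trajectory $\|x(s)\|$ for $s\in[0,1]$ (so that the derivatives of $b$ evaluated along $x(\cdot)$, and the first and second derivatives of $s\mapsto x(s)$, are controlled), which follows from the linear-growth/Lipschitz structure of the right-hand side of \eqref{08} via Grönwall, together with the assumed boundedness of the derivatives of $q$. Once $x(\cdot)$ is confined to a fixed compact-in-norm region and all the schedule functions and score derivatives are bounded there, $G''$ is bounded by a universal constant and the conclusion follows. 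I would state this a priori bound explicitly (or cite it from the ODE well-posedness already implicit in the setup) and then present the Taylor argument as above.
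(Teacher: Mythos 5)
Your proof is correct and follows essentially the same route as the paper's: a Taylor expansion of $G$ about the midpoint $t_{n-\frac12}$, with the linear term vanishing by symmetry and the second-order remainder giving the $O(h^3)$ bound. You are in fact somewhat more careful than the paper, which simply asserts $G\in C^2$ without verification and does not discuss uniformity of the constant in $n$; the orientation issue you flag (the backward integral $\int_{t_n}^{t_{n-1}}$ formally produces $-hG(t_{n-\frac12})$ rather than $+hG(t_{n-\frac12})$) is a sign-convention wrinkle already present in the paper's own statement and proof.
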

\begin{proof}
Under Assumption \ref{ass:01}, $G\in C^2$ in $[t_{n-1},t_n]$. We expand $G(s)$ in a Taylor series around $s=t_{n-\frac{1}{2}}$
\begin{equation*}
G(s)
=G(t_{n-\frac{1}{2}})+G'(t_{n-\frac{1}{2}})(s-t_{n-\frac{1}{2}})+\frac{1}{2}G''(t_{n-\frac{1}{2}})(s-t_{n-\frac{1}{2}})^2+O(s^3).
\end{equation*}
Integrate $G(s)$ from $t_n$ to $t_{n-1}$ gives the standard midpoint error bound
\begin{align*}
\int_{t_n}^{t_{n-1}} G(s)\td s
&=\int_{t_n}^{t_{n-1}} G(t_{n-\frac{1}{2}})\td s
+\int_{t_n}^{t_{n-1}}G'(t_{n-\frac{1}{2}})(s-t_{n-\frac{1}{2}})\td s\\
&\quad+\int_{t_n}^{t_{n-1}}G''(t_{n-\frac{1}{2}})(s-t_{n-\frac{1}{2}})^2\td s+O(h^3)\\
&=hG(t_{n-\frac{1}{2}})+O(h^3),
\end{align*}
which yields the stated inequality.
\end{proof}

The second term in Lemma \ref{lemma:02} measures how well the auxiliary point $p$ approximates the exact midpoint $x(t_{n-\frac{1}{2}})$. The next lemma shows that this approximation error is of order $O(h^2)$.
\begin{lemma}\label{lemma:04}
Under Assumption \ref{ass:01} there exists a constant $C>0$, independent of $h$, such that
\begin{align*}
\|p-x(t_{n-\frac{1}{2}})\|\le Ch^2,
\end{align*}
where in the definition of $p$ we take $\tx_n = x(t_n)$.
\end{lemma}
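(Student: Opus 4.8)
The plan is to compare the auxiliary point $p$ with the exact midpoint value $x(t_{n-\frac{1}{2}})$ by introducing the exact half-step flow as an intermediate object. Recall from \eqref{03} that, with $\tx_n = x(t_n)$,
$$p = m_f(t_{n-\frac12},t_n)x(t_n) + \tfrac{h}{2}\,b\bigl(t_n,\,m_f(t_{n-\frac12},t_n)x(t_n)\bigr).$$
Write $z := m_f(t_{n-\frac12},t_n)x(t_n)$ for the state after the first exact linear half-step. By Lemma \ref{lemma:01} applied on the interval $[t_{n-\frac12},t_n]$ (reading it backward from $t_n$ to $t_{n-\frac12}$),
$$x(t_{n-\frac12}) = z + \int_{t_n}^{t_{n-\frac12}} m_f\bigl(t_{n-\frac12},s\bigr)\,b(s,x(s))\,\td s.$$
So
$$p - x(t_{n-\frac12}) = \tfrac{h}{2}\,b(t_n,z) - \int_{t_n}^{t_{n-\frac12}} m_f\bigl(t_{n-\frac12},s\bigr)\,b(s,x(s))\,\td s.$$
This is precisely the local error of one explicit Euler half-step (with the linear part solved exactly), so I expect it to be $O(h^2)$.

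The key steps, in order, are as follows. First, I would set $\widetilde G(s):= m_f(t_{n-\frac12},s)\,b(s,x(s))$ for $s\in[t_{n-\frac12},t_n]$ and observe that $\int_{t_n}^{t_{n-\frac12}}\widetilde G(s)\,\td s = -\int_{t_{n-\frac12}}^{t_n}\widetilde G(s)\,\td s$ has length $|t_n-t_{n-\frac12}|=h/2$, so a first-order Taylor expansion of $\widetilde G$ around $s=t_n$ gives $\int_{t_n}^{t_{n-\frac12}}\widetilde G(s)\,\td s = -\tfrac{h}{2}\widetilde G(t_n) + O(h^2)$. Since $m_f(t_{n-\frac12},t_n)$ multiplies $b(t_n,x(t_n))$ inside $\widetilde G(t_n)$ but $p$ uses $b(t_n,z)$ with $z = m_f(t_{n-\frac12},t_n)x(t_n)$, these do not match term-by-term; the cleanest route is instead to keep the linear flow exact throughout. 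So alternatively: substitute $y(s):=$ exact solution of the nonlinear-only flow $\td y/\td s = b(s,y)$ started from $z$ at time $t_n$; then $\tfrac h2 b(t_n,z)$ is the explicit-Euler step for $y$, hence $y(t_{n-\frac12}) - p = O(h^2)$ by standard Euler local error (needs $b$ and $\partial_s b,\partial_x b$ bounded, which Assumption \ref{ass:01} provides via smoothness of $q$, $g$ and boundedness of derivatives). Second, I would bound $\|x(t_{n-\frac12}) - y(t_{n-\frac12})\|$: both solve the full ODE \eqref{08} versus the nonlinear-only ODE from the same point $z$, but note $z$ is itself the exact endpoint of the linear half-flow from $x(t_n)$, while $x(t_{n-\frac12})$ is the full flow; the discrepancy over a half-step between composing (linear then nonlinear) and the true flow is the Lie-splitting local error, which is $O(h^2)$ — this uses boundedness of the commutator-type quantities, i.e. of $f$, $b$, $\partial_x b$ on the relevant domain, together with Gronwall on $[t_{n-\frac12},t_n]$. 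Third, combine the two $O(h^2)$ bounds by the triangle inequality to conclude $\|p - x(t_{n-\frac12})\|\le Ch^2$ with $C$ independent of $h$.

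The main obstacle I anticipate is controlling norms along the trajectories uniformly in $n$ and $h$: the various Taylor/Euler/splitting remainders involve $\partial_s b$, $\partial_x b$ and second derivatives of $b$ evaluated at points like $z$, $p$, and intermediate values on $[t_{n-\frac12},t_n]$, and one must argue these points stay in a region where Assumption \ref{ass:01} delivers the needed bounds. Since $b(t,x) = -\tfrac12 g^2(t)\nabla_x\log q(x,t)$ with $g\in C^2$ and $\nabla_x\log q$ having bounded derivatives up to third order (so in particular $\partial_x b$, $\partial_s b$, $\partial_{xx} b$ are bounded), this is really just bookkeeping: the Lipschitz constant $L$ and the sup-norms of $g$, $f$ and of the low-order derivatives of $\log q$ furnish a single constant $C$. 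A short Gronwall argument also shows the auxiliary points $z$, $p$ and $x(s)$ on a half-step differ from $x(t_n)$ by $O(h)$, so no growth-at-infinity issue arises. Once the uniform bounds are in place, each remainder term is standard, and the proof is complete.
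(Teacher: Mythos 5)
Your proposal follows the same backbone as the paper's proof: the paper likewise introduces $\tilde p:=m_f(t_{n-\frac12},t_n)x(t_n)$ (your $z$), applies the variation-of-constants identity of Lemma~\ref{lemma:01} on the half-interval (your display is exactly \eqref{25}), and then Taylor-expands the integral about $s=t_n$ to identify it with $\frac h2 b(t_n,\tilde p)$ up to $O(h^2)$ (equation \eqref{26}), which is precisely your ``route 1.'' The reason you give for abandoning that route --- that $\tilde G(t_n)=m_f(t_{n-\frac12},t_n)\,b(t_n,x(t_n))$ does not match $b(t_n,z)$ term-by-term --- is not a real obstacle: $m_f(t_{n-\frac12},t_n)=1+O(h)$ and $b(t_n,z)=b(t_n,x(t_n))+O(h)$ by Lipschitz continuity, so the mismatch is $O(h)$ and contributes only $O(h)\cdot\frac h2=O(h^2)$; this is exactly how the paper closes the argument. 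Your ``route 2'' --- inserting the exact nonlinear sub-flow $y$ started from $z$ and splitting the error into an Euler local error for $y$ plus a Lie-splitting local error over the half step --- is a correct, mildly different packaging of the same estimate; it buys conceptual separation of the two error sources at the price of an extra Gronwall/commutator argument that the direct expansion avoids.

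One step, as literally written, does not hold and needs a sign fix (it is inherited from \eqref{03}, but your own computation exposes it): since $t_{n-\frac12}=t_n-\frac h2$, the explicit Euler step for $\td y/\td s=b(s,y)$ from $t_n$ backward to $t_{n-\frac12}$ is $z-\frac h2 b(t_n,z)$, not $z+\frac h2 b(t_n,z)$, consistent with your expansion $\int_{t_n}^{t_{n-\frac12}}\tilde G(s)\,\td s=-\frac h2\tilde G(t_n)+O(h^2)$. With $p=z+\frac h2 b(t_n,z)$ taken at face value one gets $p-x(t_{n-\frac12})=h\,b(t_n,z)+O(h^2)$, which is only $O(h)$, so the claim ``$y(t_{n-\frac12})-p=O(h^2)$ by standard Euler local error'' fails as stated. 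The lemma is true for the $p$ the scheme actually computes, namely $p=\tx_n^*+\frac h2 k_1=\tx_n^*-\frac h2 B(t_n,\tx_n^*)$ from \eqref{sampler}; once that sign is used, both your Euler-step identification and the splitting estimate go through, and your remaining remarks (boundedness of $b$ and its derivatives from Assumption~\ref{ass:01}, and the $O(h)$ proximity of $z$, $p$, $x(s)$ to $x(t_n)$ along the half step) are the right bookkeeping to make the constants uniform.
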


\begin{proof}
Let $\tilde p:=m_f(t_{n-\frac12},t_n)x(t_n)$, so that $\tilde p$ is the value at $t_{n-\frac{1}{2}}$ obtained by evolving the linear ODE $x'(t)=f(t)x(t)$ backward from $x(t_n)$, while 
\begin{equation}\label{29}
m_f(t_{n-\frac12},t_n)=\exp \left(\int_{t_n}^{t_{n-\frac12}} f(\tau)\td\tau  \right)
\leq\exp \left(\|f\|_\infty\cdot\frac{h}{2}  \right)
=1+O(h),
\end{equation}
and thus $\tilde p=x(t_n)+O(h)$. By Lemma \ref{lemma:01} with $t=t_{n-\frac{1}{2}}$,
\begin{align}\label{25}
x(t_{n-\frac12})
=\tilde p+\int_{t_n}^{t_{n-\frac12}}
m_f\left(t_{n-\frac12},s\right) b\left(s,x(s)\right)\td s.
\end{align}
The boundedness of $f$, $g$ and $q$ leads to the boundedness of $m_f$ and $b$. Since the integral is over an interval of length $\frac{h}{2}$, the above boundedness implies that the integral is $O(h)$. A Taylor expansion of the integral around $s=t_n$, using the Lipschitz continuity of $b$ in $x$, shows that
\begin{align}\label{26}
\int_{t_n}^{t_{n-\frac{1}{2}}}m_f(t_{n-\frac{1}{2}},s)b(s,x(s))\td s=\frac{h}{2}b(t_n,\tilde p)+ O(h^2).
\end{align}
On the other hand, since $p=\tilde p+\frac{h}{2}b(t_n,\tilde p)$, combining \eqref{25} and \eqref{26} gives $x(t_{n-\frac{1}{2}})=p+O(h^2)$, which proves the claim.
\end{proof}

We can now combine Lemmas \ref{lemma:02}-\ref{lemma:04} with the Lipschitz property of $b$ to obtain a clean bound on the local truncation error.

\begin{lemma}\label{lemma:05}
Under Assumption \ref{ass:01}, there exists a constant $C>0$, independent of $h$, such that 
\begin{align}\label{11}
\|\tau_{n-1}\|\le Ch^3.
\end{align}
\end{lemma}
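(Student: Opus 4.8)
The plan is to combine the three preceding lemmas exactly as advertised: Lemma~\ref{lemma:02} writes $\tau_{n-1}$ as a sum of a midpoint quadrature error and a ``consistency'' term, Lemma~\ref{lemma:03} controls the first by $Ch^3$, and Lemma~\ref{lemma:04} together with the Lipschitz property of $b$ will control the second. So the only real work is the second term.

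First I would recall from Lemma~\ref{lemma:02} that
\begin{equation*}
\tau_{n-1}=\underbrace{\Big(\int_{t_n}^{t_{n-1}} G(s)\td s - hG(t_{n-\frac12})\Big)}_{=:I} + \underbrace{hm_f\left(t_{n-1},t_{n-\frac12}\right)\Big(b(t_{n-\frac12},x(t_{n-\frac12}))-b(t_{n-\frac12},p)\Big)}_{=:II},
\end{equation*}
and apply the triangle inequality $\|\tau_{n-1}\|\le\|I\|+\|II\|$. By Lemma~\ref{lemma:03}, $\|I\|\le C_1h^3$.

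For $\|II\|$, I would bound the scalar factor $hm_f(t_{n-1},t_{n-\frac12})$ by noting, as in \eqref{29}, that under Assumption~\ref{ass:01} the function $f$ is bounded on $[0,1]$, so $m_f(t_{n-1},t_{n-\frac12})=\exp\!\left(\int_{t_{n-\frac12}}^{t_{n-1}}f(\tau)\td\tau\right)\le e^{\|f\|_\infty h/2}$, which is bounded by a constant uniformly in $h\le 1$ (indeed it equals $1+O(h)$). For the vector factor, the Lipschitz bound in Assumption~\ref{ass:01} gives $\|b(t_{n-\frac12},x(t_{n-\frac12}))-b(t_{n-\frac12},p)\|\le L\|x(t_{n-\frac12})-p\|$, and Lemma~\ref{lemma:04} bounds this by $LC_2h^2$. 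Multiplying the three estimates yields $\|II\|\le h\cdot e^{\|f\|_\infty/2}\cdot LC_2h^2 = C_3h^3$. Adding the two pieces gives $\|\tau_{n-1}\|\le (C_1+C_3)h^3 =: Ch^3$ with $C$ independent of $h$, which is \eqref{11}.

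I do not anticipate a genuine obstacle here, since the statement is a bookkeeping synthesis of the three lemmas; the one point to be slightly careful about is that the constant $C$ must be made uniform in $n$ and $h$, which holds because all the constants in Lemmas~\ref{lemma:03} and~\ref{lemma:04} and the bound $\|f\|_\infty$ are uniform in $n$ (they come only from the global regularity and Lipschitz data in Assumption~\ref{ass:01}, not from the particular interval $[t_{n-1},t_n]$), and because we use $h=1/T\le 1$ to absorb the $e^{\|f\|_\infty h/2}$ factor into a constant. If one wished to be fully explicit, one could track $C_1$ in terms of $\sup_{[t_{n-1},t_n]}\|G''\|$ (bounded via the chain rule from the bounds on $f$, $g$, $b$ and $\nabla b$ in Assumption~\ref{ass:01}) and $C_2$ similarly, but for the purposes of \eqref{11} the existence of a uniform $C$ suffices.
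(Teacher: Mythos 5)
Your proposal is correct and follows essentially the same route as the paper: decompose $\tau_{n-1}$ via Lemma~\ref{lemma:02}, bound the quadrature term by Lemma~\ref{lemma:03}, and bound the consistency term by combining the Lipschitz property of $b$, the bound $m_f(t_{n-1},t_{n-\frac12})=1+O(h)$, and Lemma~\ref{lemma:04}. Your version is slightly more careful than the paper's about the uniformity of the constants in $n$ and $h$, which is a welcome addition but not a different argument.
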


\begin{proof}
By Lemma \ref{lemma:04}, using the Lipschitz continuity of $b$ in $x$, we obtain
\begin{align*}
&hm_f\left(t_{n-1},t_{n-\frac12}\right)\Big(
b(t_{n-\frac12},x(t_{n-\frac12}))-b(t_{n-\frac12},p)\Big)\\
&\le h|m_f(t_{n-1},t_{n-\frac12})|  L \|p-x(t_{n-\frac12})\| \\
&= O(h)\cdot O(1)\cdot L\cdot O(h^2)
= O(h^3). 
\end{align*}
Then, using Lemma \ref{lemma:02}, we obtain \eqref{11}.
\end{proof}
Thus, the sampler is locally third order in time.

\subsection{Global error}
To turn the local error into a global error, we need a stability estimate for the numerical map $\Psi_h$ \eqref{10}. Informally, the next lemma shows that one step of the scheme amplifies perturbations in the initial value by at most a factor $1+O(h)$.

\begin{lemma}\label{lemma:06}
Under Assumption \ref{ass:01}, there exists a constant $C>0$, independent of $h$, such that for all $\xi,\eta \in \mathbb{R}^d$,
\begin{align}\label{12}
\|\Psi_h(t_n,\xi)-\Psi_h(t_n,\eta)\|\le(1+C h)\|\xi-\eta\|.
\end{align}
\end{lemma}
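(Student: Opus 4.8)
The plan is to estimate each of the two terms in the explicit formula \eqref{10} for $\Psi_h$ separately, using the triangle inequality. Writing
\[
\Psi_h(t_n,\xi)-\Psi_h(t_n,\eta)
= m_f(t_{n-1},t_n)(\xi-\eta)
+ h\,m_f\!\left(t_{n-1},t_{n-\frac12}\right)\Big(b\big(t_{n-\frac12},p_\xi\big)-b\big(t_{n-\frac12},p_\eta\big)\Big),
\]
where $p_\xi,p_\eta$ are the auxiliary points \eqref{03} associated with initial data $\xi$ and $\eta$ respectively, the first term is controlled by $|m_f(t_{n-1},t_n)|\le\exp(\|f\|_\infty h)=1+O(h)$ exactly as in \eqref{29}, which handles the linear half-steps. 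For the second term, I would first bound $|m_f(t_{n-1},t_{n-\frac12})|\le 1+O(h)=O(1)$, then apply the Lipschitz continuity of $b$ from Assumption \ref{ass:01} to get $\|b(t_{n-\frac12},p_\xi)-b(t_{n-\frac12},p_\eta)\|\le L\|p_\xi-p_\eta\|$, so the whole term is bounded by $O(h)\cdot L\cdot\|p_\xi-p_\eta\|$.

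The remaining and really the only substantive step is to control $\|p_\xi-p_\eta\|$ in terms of $\|\xi-\eta\|$. From the definition \eqref{03},
\[
p_\xi-p_\eta = m_f(t_{n-\frac12},t_n)(\xi-\eta)
+ \frac{h}{2}\Big(b\big(t_n,m_f(t_{n-\frac12},t_n)\xi\big)-b\big(t_n,m_f(t_{n-\frac12},t_n)\eta\big)\Big),
\]
and again using $|m_f(t_{n-\frac12},t_n)|=1+O(h)$ together with the Lipschitz bound on $b$ gives
\[
\|p_\xi-p_\eta\|\le (1+O(h))\|\xi-\eta\| + \frac{h}{2}L\,(1+O(h))\|\xi-\eta\| = (1+O(h))\|\xi-\eta\|.
\]
Substituting this back, the second term of $\Psi_h(t_n,\xi)-\Psi_h(t_n,\eta)$ is $O(h)\cdot L\cdot(1+O(h))\|\xi-\eta\| = O(h)\|\xi-\eta\|$, and combining with the first term yields $\|\Psi_h(t_n,\xi)-\Psi_h(t_n,\eta)\|\le (1+Ch)\|\xi-\eta\|$ for a constant $C$ depending only on $\|f\|_\infty$ and $L$ (and absorbing the $O(h^2)$ cross terms into $Ch$ for $h$ bounded, say $h\le 1$).

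I do not expect a genuine obstacle here: the scheme \eqref{10} is an explicit one-step map built from the contraction-like factors $m_f$ (each within $1+O(h)$ of the identity) and one evaluation of the $L$-Lipschitz field $b$ scaled by $h$, so the only care needed is bookkeeping — making sure the $O(h)$ constants are genuinely $h$-independent and that the nested Lipschitz estimate for the auxiliary point $p$ does not secretly produce an $O(1)$ rather than $O(h)$ amplification. The mild subtlety is that $b$'s Lipschitz constant enters twice (once through $p$, once through the outer $b$ evaluation), but since the inner occurrence is multiplied by $h/2$ it only contributes at order $h$, so the product stays $1+O(h)$ rather than blowing up. This is the standard one-step stability argument for an explicit Runge-Kutta-type scheme applied to a Lipschitz vector field, adapted to the semilinear splitting structure.
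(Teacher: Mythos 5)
Your argument is correct and is essentially identical to the paper's own proof: both decompose $\Psi_h(t_n,\xi)-\Psi_h(t_n,\eta)$ via \eqref{10}, bound the exponential factors $m_f$ by $1+O(h)$ as in \eqref{29}, and control $\|p_\xi-p_\eta\|\le(1+Ch)\|\xi-\eta\|$ using the Lipschitz continuity of $b$ before absorbing the higher-order terms. No gap.
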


\begin{proof}
Using \eqref{10} and the triangle inequality,
\begin{align}\label{30}
\|\Psi_h(t_n,\xi)-\Psi_h(t_n,\eta)\| &= \Big\| m_f(t_{n-1},t_n)(\xi-\eta)
+hm_f(t_{n-1},t_{n-\frac12})
(b(t_{n-\frac12},p_1)-b(t_{n-\frac12},p_2))\Big\|\notag \\
&\le |m_f(t_{n-1},t_n)|\|\xi-\eta\|
+  Lh|m_f(t_{n-1},t_{n-\frac12})| \|p_1-p_2\|,
\end{align}
where $p_1$ and $p_2$ are defined by \eqref{03} with $\tx_n=\xi$ and $\tx_n=\eta$, respectively. By \eqref{29},
\begin{equation}\label{31}
\|p_1-p_2\|\le |m_f(t_{n-\frac{1}{2}},t_n)|\|\xi-\eta\| 
+ \frac{h}{2}L|m_f(t_{n-\frac{1}{2}},t_n)|\|\xi-\eta\| \le (1+C h)\|\xi-\eta\|
\end{equation}
for some $C>0$. And similar to \eqref{29}, we have
\begin{align}
&m_f(t_{n-1},t_n)=1+O(h),\label{32}\\
&m_f(t_{n-1},t_{n-\frac12})=1+O(h)\label{33}.
\end{align}

Substituting \eqref{31}-\eqref{33} into \eqref{30} and absorbing higher-order terms into the constant $C$ yield \eqref{12}.
\end{proof}

We now combine the local error bound and the stability estimate to obtain a global convergence result. Let $e_n:=x(t_n)-\tx_n$ denote the global error at time $t_n$.

\begin{theorem}\label{thm:01}
Suppose Assumption \ref{ass:01} holds, and let the numerical solution $\{\tx_n\},n=0,1,\dots,T$ be generated by \eqref{10} with terminal condition $\tx_T=x(1)$. Then there exists a constant $C>0$, independent of $h$, such that
\begin{align}\label{13}
\max_{0\le n\le T}\|x(t_n)-\tx_n\|\le Ch^2.
\end{align}
\end{theorem}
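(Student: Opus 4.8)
The plan is to run the classical ``consistency + stability $\Rightarrow$ convergence'' argument, i.e.\ a discrete Grönwall recursion for the global error $e_n = x(t_n)-\tx_n$. First I would record the one-step identity. By \eqref{27} the numerical iterate satisfies $\tx_{n-1}=\Psi_h(t_n,\tx_n)$, while by the very definition of the local truncation error, $x(t_{n-1})=\Psi_h(t_n,x(t_n))+\tau_{n-1}$. Subtracting these two relations gives
\begin{equation*}
e_{n-1} = \bigl(\Psi_h(t_n,x(t_n)) - \Psi_h(t_n,\tx_n)\bigr) + \tau_{n-1}.
\end{equation*}
Taking norms, applying the stability estimate of Lemma~\ref{lemma:06} with $\xi = x(t_n)$ and $\eta = \tx_n$, and invoking the local error bound of Lemma~\ref{lemma:05}, yields
\begin{equation*}
\|e_{n-1}\| \le (1+C_1 h)\,\|e_n\| + C_2 h^3
\end{equation*}
for constants $C_1,C_2>0$ that depend only on the data of Assumption~\ref{ass:01} and hence are independent of $h$.

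Next I would iterate this recursion. Since the scheme marches backward with terminal condition $\tx_T = x(1)$, we have $e_T = 0$; unrolling from $n=T$ downward gives, for every $0\le n\le T$,
\begin{equation*}
\|e_n\| \le C_2 h^3 \sum_{j=0}^{T-n-1}(1+C_1 h)^j \le C_2 h^3 \cdot \frac{(1+C_1 h)^{T}-1}{C_1 h}.
\end{equation*}
Using $hT = 1$, we bound $(1+C_1 h)^{T} \le e^{C_1 h T} = e^{C_1}$, so the right-hand side is at most $\tfrac{C_2(e^{C_1}-1)}{C_1}\,h^2$, a fixed multiple of $h^2$ independent of $h$. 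Taking the maximum over $n$ establishes \eqref{13}.

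This argument is essentially routine; the only points needing a little care are bookkeeping rather than genuine obstacles. One is simply that the integration is backward in time, so the recursion propagates from $e_T=0$ toward $e_0$ and the geometric-sum index must be tracked accordingly. A more structural point worth emphasizing is that no a priori bound on the numerical trajectory $\{\tx_n\}$ is required: Lemma~\ref{lemma:06} is a \emph{global} Lipschitz-type stability estimate, valid for all $\xi,\eta\in\mathbb{R}^d$ by virtue of the global Lipschitz hypothesis on $b$, so the amplification constant $C_1$ is uniform and no ``confinement'' lemma keeping $\tx_n$ in a bounded region is needed. I would close by stating explicitly that both $C_1$ and $C_2$ originate from Assumption~\ref{ass:01} alone, which is precisely what makes the final estimate a clean $O(h^2)$ with an $h$-independent constant.
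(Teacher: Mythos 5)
Your proposal is correct and follows essentially the same route as the paper: the one-step splitting of $e_{n-1}$ into the local truncation error (Lemma~\ref{lemma:05}) plus the stability term (Lemma~\ref{lemma:06}), followed by unrolling the recursion from $e_T=0$. Your explicit geometric-sum bound $(1+C_1h)^T\le e^{C_1}$ is in fact slightly more careful than the paper's shorthand ``$Ckh^3+O(h^4)$'', but the argument is the same.
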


\begin{proof}
Using the definition in \eqref{27}, we write
\begin{equation*}
\|e_{n-1}\|
= \|x(t_{n-1})-\Psi_h(t_n,\tx_n)\|
\le \|x(t_{n-1})-\Psi_h(t_n,x(t_n))\| 
 + \|\Psi_h(t_n,x(t_n))-\Psi_h(t_n,\tx_n)\|. 
\end{equation*}
Applying Lemma \ref{lemma:05} to the first term and Lemma \ref{lemma:06} to the second term gives 
\begin{equation}\label{34}
\|e_{n-1}\|\le C h^3 + (1+C h)\|e_n\|.
\end{equation}
Noting that $\|e_T\|=\|x(1)-\tilde{x}_T\|=0$, applying \eqref{34} recursively for $n=T,T-1,\dots$ leads to
\begin{equation*}
\|e_{T-k}\|\le Ckh^3+O(h^4),\quad\forall k=1,\dots,T.
\end{equation*}
Since $kh\leq1$, from above we have $\max_{0\le n\le T}\|e_n\|\le Ch^2$, which is precisely \eqref{13}.
\end{proof}
Theorem \ref{thm:01} shows that the numerical scheme in the sampler \eqref{sampler} achieves global second-order accuracy, in the sense that the deterministic PF-ODE solution is approximated with error $O(h^2)$ uniformly over the entire sampling trajectory.

\section{Total variation error of the sampler}\label{sec:tv}
Building on the local and global accuracy results obtained in Section~IV, we now turn to their distributional consequences and quantify how the numerical scheme affects the total variation distance between the samples generated by the sampler and the target data distribution.

For two probability densities $p,q$ on $\mathbb{R}^{d}$, the total variation distance is defined by
\begin{equation*}
\mathrm{TV}(p,q)
:= \frac12 \int_{\mathbb{R}^{d}} |p(x)-q(x)|\td x
= \frac12\|p-q\|_{L^{1}(\mathbb{R}^{d})}.
\end{equation*}
We therefore focus on estimating $\|p-q\|_{L^{1}}$. Throughout this section, $\nabla$ denotes the gradient with respect to the spatial variable $x$, $\nabla\cdot$ denotes the divergence, and $\Delta$ denotes the Laplacian. We use $\|\cdot\|_{\mathrm{op}}$ to stand for the operator norm of the matrix with respect to the Euclidean norm.

\subsection{Assumptions}
We begin by listing all the analytic and probabilistic assumptions needed for this section. This makes it transparent which constants the final TV bounds depend on.

Let $q(\cdot,t)$ denote the exact marginal density at time $t$, and
\begin{equation*}
s(x,t) := \nabla_{x}\log q(x,t)
\end{equation*}
the corresponding exact score. The learned score network is denoted by $s_{\theta}(x,t)$. Moreover, let
\begin{equation}
 R_T := 2\sqrt{\log T}, \quad \Omega_T := \bigl\{x\in\mathbb{R}^{d}:\|x\|\le R_T\sqrt{d}\bigr\}. \label{39}
\end{equation}
Let $\tilde q_{h}(\cdot,t)$ be the marginal density at time $t$ of the numerical sampler with step size $h=1/T$, and 
\begin{align*}
\tilde s(x,t):=\nabla_{x}\log \tilde q_h(x,t)
\end{align*}
is the score of the numerical density.

\begin{assumption}
\label{ass:02}
\begin{enumerate}
\item The scalar functions $f,g \in C^{2}([0,1])$. We denote
\begin{equation*}
g^{2}(t) \le G_{\infty}, \quad \forall t\in[0,1],
\end{equation*}
for some finite constant $G_{\infty}>0$.

\item $s_{\theta}(x,t)$, $s(x,t)$ and $\tilde s(x,t)$ are twice continuously differentiable in $(x,t)$ and there exist constants $S_0,S_1,L_1>0$ such that
\begin{align}\label{35}
&\|s_{\theta}(x,t)\| \le S_{0} + S_{1}\|x\|, \|J s_{\theta}(x,t)\|_{\mathrm{op}} \le L_{1},\notag\\
&\|s(x,t)\| \le S_{0} + S_{1}\|x\|,\|J s(x,t)\|_{\mathrm{op}} \le L_{1},\notag\\
&\|\tilde s(x,t)\| \le S_{0} + S_{1}\|x\|
\end{align}
for all $x\in\mathbb{R}^{d}$, $t\in[0,1]$, where $J s_{\theta}(x,t)$ and $Js(x,t)$ are the Jacobian of $s_{\theta}$ and $s(x,t)$ with respect to $x$. 

\item Both the exact and numerical distributions have uniformly sub-Gaussian tails, in the sense that
\begin{equation}\label{42}
\mathbb{P}_{\xi\sim q(\cdot,t)}\bigl(\xi\notin \Omega_T\bigr)
+ \mathbb{P}_{\xi\sim\tilde q_{h}(\cdot,t)}\bigl(\xi\notin\Omega_T\bigr)
\le T^{-4}
\end{equation}
for all $t\in[0,1]$. And for each integer $k\in\{1,2,3,4\}$ there exists $C_1<\infty$ such that for all $t\in[0,1]$,
\begin{equation}\label{43}
\mathbb{E}_{\xi\sim q(\cdot,t)}\bigl[\|\xi\|^k\bigr] \leq C_1d^{k/2},\quad
\mathbb{E}_{\xi\sim \tilde q_{h}(\cdot,t)}\bigl[\|\xi\|^k]\leq C_1d^{k/2}.
\end{equation}

\item 
There exists a constant $C_2>0$ such that for all $t\in[0,1]$,
\begin{align}
&\Bigl(\mathbb{E}_{\xi\sim q(\cdot,t)}\bigl[\|s(\xi,t)\|^{2}\mathbf{1}_{\Omega_T}(\xi)\bigr]\Bigr)^{1/2}
\le C_2\sqrt{d},\label{40}\\
&\Bigl(\mathbb{E}_{\xi\sim \tilde q(\cdot,t)}\bigl[\|\tilde s(\xi,t)\|^{2}\mathbf{1}_{\Omega_T}(\xi)\bigr]\Bigr)^{1/2}
\le C_2\sqrt{d},\label{41}
\end{align}
where $\mathbf{1}_{\Omega_T}(\cdot)$ denotes the characteristic function of the set $\Omega_T$.
\end{enumerate}
\end{assumption}
Assumption \ref{ass:02} collects mild growth, smoothness and tail conditions required for the TV bounds analysis. Such ``regularity" and ``tail'' assumptions are standard in non-asymptotic diffusion analyses \cite{Li2023,Li2024_1}, and weaker than imposing global structural conditions such as log-concavity that are used in some Wasserstein-based results \cite{Gao2024,Huang2025}.

Under Assumption~\ref{ass:02} we will repeatedly use the following error metrics for the score network:
\begin{equation*}
\varepsilon_{\mathrm{score}}:= \sqrt{\sup_{t\in[0,1]}\mathbb{E}_{\xi\sim q(\cdot,t)}
\bigl[\|s_{\theta}(\xi,t) - s(\xi,t)\|^{2}\bigr]},
\end{equation*}
\begin{equation*}
\varepsilon_{\mathrm{Jac}}
:= \sup_{t\in[0,1]}
\mathbb{E}_{\xi\sim q(\cdot,t)}
\bigl[\|J s_{\theta}(\xi,t) - J s(\xi,t)\|_{\mathrm{op}}\bigr].
\end{equation*}
The first quantity controls the score mismatch in mean square; the second controls the mismatch of the Jacobian, which will be crucial when estimating divergences of error fields. Note that \eqref{35} in Assumption~\ref{ass:02} guarantees that the Radon-Nikodym derivative between $\tilde q_{h}$ and $q$ is uniformly bounded. Therefore, there exists some constant $C_3>0$
\begin{equation}\label{36}
\mathbb{E}_{\xi\sim \tilde q_h(\cdot,t)}[\zeta(\xi)]\le C_3\mathbb{E}_{\xi\sim q(\cdot,t)}[\zeta(\xi)],
\end{equation}
where $\zeta$ is any nonnegative function such that these expectations exist.

\subsection{Continuity equations and renormalization }
The deterministic probability flow ODE with exact score $x'(t) = f(t)x(t) - \tfrac12 g^{2}(t)s(x(t),t)$ has the corresponding continuity equation
\begin{equation}\label{14}
\partial_{t} q(x,t)
+ \nabla\cdot\bigl(q(x,t)v(x,t)\bigr) = 0, 
\end{equation}
where the exact velocity field is
\begin{equation*}
v(x,t) := f(t)x - \tfrac12 g^{2}(t)s(x,t).
\end{equation*}

On the numerical side, we use the sampler scheme \eqref{sampler} from Section \ref{seC_2} with the learned score $s_{\theta}$.
Then we have the corresponding continuity equation
\begin{equation}\label{16}
\partial_{t} \tilde q_{h}(x,t)
+ \nabla\cdot\bigl(\tilde q_{h}(x,t)\tilde v_{h}(x,t)\bigr) = 0,
\end{equation}
where $\tilde v_{h}(x,t)$ is the modified vector field. Note that the modified-equation analysis of the second-order Strang splitting shows that a single step of the deterministic sampler can be interpreted as the time-$h$ flow of the modified vector field
\begin{equation}\label{37}
\tilde v_{h}(x,t)
= f(t)x - \tfrac12 g^{2}(t)s_{\theta}(x,t)
+ O(h^{2}).
\end{equation}
Inspired by \eqref{37}, we define
\begin{equation*}
r(x,t) = \frac{1}{h^2}\left(\tilde v_{h}(x,t)-f(t)x+\frac{1}{2} g^{2}(t)s_{\theta}(x,t)\right).
\end{equation*}
Then \eqref{37} leads to
\begin{equation}\label{15}
\tilde v_{h}(x,t)= f(t)x - \tfrac12 g^{2}(t)s_{\theta}(x,t)+ h^{2} r(x,t),
\end{equation}
where $r(x,t)$ should be a function of $O(1)$.  

By construction, both flows start from the same initial value at $t=1$, i.e., $q(\cdot,1) = \tilde q_{h}(\cdot,1)$. We now introduce the difference of densities
\begin{equation*}
w(x,t) := \tilde q_{h}(x,t) - q(x,t).
\end{equation*}
Subtracting \eqref{14} from \eqref{16}, we obtain the linear continuity equation for $w$ with an explicit source term, i.e.,
\begin{equation}\label{17}
\partial_{t} w(x,t)
+ \nabla\cdot\bigl(w(x,t)v(x,t)\bigr)
= \nabla\cdot F(x,t),
\end{equation}
where
\begin{equation*}
F(x,t) :=
\tfrac12 g^{2}(t)\tilde q_{h}(x,t)\bigl(s_{\theta}(x,t)-s(x,t)\bigr) 
- h^{2}\tilde q_{h}(x,t)r(x,t)
\end{equation*}
is a vector field. Equation \eqref{17} gathers all deviations between the exact and numerical drifts: the first part comes from score mismatch, the second from the modified vector field. We remark that \eqref{17} is important for the analysis: it expresses how the density error is transported along the exact velocity field and driven by an explicit divergence source $\nabla\cdot F$. The next lemma turns this PDE into a differential inequality for the $L^{1}$-norm of $w$, which is exactly what we need to control the TV distance.

\begin{lemma}
\label{lemma:07}
Under Assumption~\ref{ass:02}, the solution $w$ of \eqref{17} satisfies
\begin{align}\label{18}
\frac{\td }{\td t}\|w(\cdot,t)\|_{L^{1}(\mathbb{R}^{d})}
\le \int_{\mathbb{R}^{d}} \bigl|\nabla\cdot F(x,t)\bigr|\td x   
\end{align}
for almost every $t\in[0,1]$.
\end{lemma}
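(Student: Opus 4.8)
The plan is to differentiate $\|w(\cdot,t)\|_{L^1}$ by hand, using the continuity equation \eqref{17} to rewrite $\partial_t w$ in divergence form, and then integrate the resulting divergences against $\mathrm{sgn}(w)$ over $\mathbb{R}^d$. Formally, writing $\|w(\cdot,t)\|_{L^1}=\int |w(x,t)|\,\td x$ and differentiating under the integral sign,
\begin{equation*}
\frac{\td}{\td t}\|w(\cdot,t)\|_{L^1}
= \int_{\mathbb{R}^d}\operatorname{sgn}(w(x,t))\,\partial_t w(x,t)\,\td x
= \int_{\mathbb{R}^d}\operatorname{sgn}(w)\Bigl(-\nabla\cdot(wv) + \nabla\cdot F\Bigr)\td x.
\end{equation*}
I would then argue that the transport term $\int \operatorname{sgn}(w)\,\nabla\cdot(wv)\,\td x$ is nonnegative — equivalently, $-\int\operatorname{sgn}(w)\nabla\cdot(wv)\le 0$ — because transport along a (sufficiently regular) velocity field is an $L^1$ contraction: it merely moves mass around without creating or destroying it. Concretely, $\operatorname{sgn}(w)\nabla\cdot(wv) = \nabla\cdot(|w|v) + (\operatorname{sgn}(w)\nabla\cdot(wv) - \nabla\cdot(|w|v))$, and one checks that the bracketed remainder vanishes a.e. (since $|w|=\operatorname{sgn}(w)\,w$ and $\nabla|w| = \operatorname{sgn}(w)\nabla w$ away from the zero set of $w$, which has measure zero under the smoothness of $q,\tilde q_h$), so $\int\operatorname{sgn}(w)\nabla\cdot(wv)\,\td x = \int \nabla\cdot(|w|v)\,\td x = 0$ by the divergence theorem, using that $|w|v$ decays at infinity (both densities have sub-Gaussian tails by \eqref{42}–\eqref{43} and $v$ has at most linear growth by \eqref{35}). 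Similarly $\int\operatorname{sgn}(w)\nabla\cdot F\,\td x \le \int |\nabla\cdot F|\,\td x$ trivially since $|\operatorname{sgn}(w)|\le 1$. Combining these gives \eqref{18}.

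A cleaner and more robust route, which I would actually write, avoids differentiating the non-smooth $|w|$ directly: replace $|w|$ by a smooth approximation $\phi_\delta(w)$ with $\phi_\delta\to|\cdot|$, $|\phi_\delta'|\le 1$, and $\phi_\delta'$ odd. Then
\begin{equation*}
\frac{\td}{\td t}\int \phi_\delta(w)\,\td x
= \int \phi_\delta'(w)\,\partial_t w\,\td x
= -\int \phi_\delta'(w)\,\nabla\cdot(wv)\,\td x + \int \phi_\delta'(w)\,\nabla\cdot F\,\td x.
\end{equation*}
For the transport term, integrate by parts: $-\int \phi_\delta'(w)\nabla\cdot(wv)\,\td x = \int \nabla(\phi_\delta'(w))\cdot w v\,\td x = \int \phi_\delta''(w)\,w\,(\nabla w\cdot v)\,\td x$; using the identity $\nabla(\phi_\delta(w) - w\phi_\delta'(w)) = -\phi_\delta''(w)\,w\,\nabla w$ and integrating by parts once more, this equals $\int (\phi_\delta(w)-w\phi_\delta'(w))\,(\nabla\cdot v)\,\td x$, which is $O(\delta)$ uniformly in $t$ because $\phi_\delta(s)-s\phi_\delta'(s)\to 0$ pointwise and $\nabla\cdot v = f(t)\,d - \tfrac12 g^2(t)\,\nabla\cdot s$ is integrable against the densities by \eqref{35} and \eqref{43}. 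For the source term, $\int \phi_\delta'(w)\nabla\cdot F\,\td x \le \int |\nabla\cdot F|\,\td x$. Letting $\delta\to 0$ and invoking dominated convergence (justified by the sub-Gaussian tails and the growth bounds in Assumption~\ref{ass:02}, which make $\nabla\cdot F\in L^1$ uniformly in $t$) yields \eqref{18} for a.e.\ $t$.

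The main obstacle is the rigorous justification of the boundary-term vanishing and the differentiation-under-the-integral / dominated-convergence steps: one must verify that $|w|v$, $\phi_\delta(w)v$, and $\nabla\cdot F$ genuinely have enough integrability and decay at infinity, uniformly in $t$, so that no mass escapes to infinity and the formal manipulations are legitimate. This is where Assumption~\ref{ass:02} does the real work — the sub-Gaussian tail bound \eqref{42}, the moment bounds \eqref{43}, the linear-growth bounds on the scores \eqref{35}, and the boundedness of $g^2$ together guarantee that $F$ and $\nabla\cdot F$ lie in $L^1(\mathbb{R}^d)$ with a bound uniform in $t\in[0,1]$, and that the velocity field $v$ grows at most linearly so that $\int \nabla\cdot(|w|v) = 0$. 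I would also note in passing that the term $h^2 r(x,t)$ inside $F$ is $O(1)$ by the modified-equation construction \eqref{15}, so it contributes nothing pathological to the integrability; its precise growth and divergence control is deferred to later lemmas, and here only its qualitative regularity is used. Once these integrability facts are in place, the differential inequality \eqref{18} follows immediately.
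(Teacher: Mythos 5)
Your proposal is correct and follows essentially the same route as the paper: both use a renormalization argument, replacing $|w|$ by a smooth approximation $\varphi_\varepsilon(w)$ with $|\varphi_\varepsilon'|\le 1$, reducing the transport term to $\int(\varphi_\varepsilon(w)-w\varphi_\varepsilon'(w))\,\nabla\cdot v\,\td x$ (which vanishes as $\varepsilon\to 0$), killing boundary terms via the sub-Gaussian tails and linear growth of $v$, and bounding the source term by $\int|\nabla\cdot F|\,\td x$. The only cosmetic difference is that you obtain the commutator identity by two integrations by parts whereas the paper writes the renormalized PDE directly via the chain rule; the substance is identical.
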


\begin{proof}
The proof uses a standard renormalization argument for transport equations. Let $\varphi_{\varepsilon}:\mathbb{R}\to\mathbb{R}$ be a smooth convex function such that $\varphi_{\varepsilon}(z) \longrightarrow |z|$ a.e. as $\varepsilon\downarrow0$, $|\varphi_{\varepsilon}'(z)|\le 1\ $ for all $z\in\mathbb{R}$ and $|\varphi_{\varepsilon}(z)|\le |z|$ a.e..
Multiplying \eqref{17} by $\varphi_{\varepsilon}'(w)$ and using the chain rule, we get
\begin{equation*}
\partial_{t}\varphi_{\varepsilon}(w)
+ \nabla\cdot\bigl(\varphi_{\varepsilon}(w)v\bigr)
+ \bigl(\varphi_{\varepsilon}'(w)w - \varphi_{\varepsilon}(w)\bigr)\nabla\cdot v
= \varphi_{\varepsilon}'(w)\nabla\cdot F.
\end{equation*}
Let $B_R=\{x\in\mathbb{R}^d:\|x\|\leq R\}$, then integrating over $B_R$ and using the divergence theorem, we obtain
\begin{equation*}
\frac{\td }{\td t}\int_{B_R} \varphi_{\varepsilon}(w)\td x 
= \int_{B_R}
\left((\varphi_{\varepsilon}(w)-\varphi_{\varepsilon}'(w)w )
\nabla\cdot v
+ \varphi_{\varepsilon}'(w)\nabla\cdot F \right)
\td x 
 + \mathcal{B}_{\varepsilon},
\end{equation*}
where the boundary term $\mathcal{B}_{\varepsilon}=\int_{B_R}\varphi_{\varepsilon}(w)v\cdot n\td S$. The sub-Gaussian tails and at-most-linear growth of $v$ in Assumption~\ref{ass:02} guarantees that $\mathcal{B}_{\varepsilon}\to 0$ as the radius $R$ tends to infinity. When letting $R\to \infty$, the sphere $B_R\to\mathbb{R}^d$ and leads to
\begin{equation}\label{28}
\frac{\td }{\td t}\int_{\mathbb{R}^d} \varphi_{\varepsilon}(w)\td x 
= \int_{\mathbb{R}^d}
\left((\varphi_{\varepsilon}(w)-\varphi_{\varepsilon}'(w)w )
\nabla\cdot v
+ \varphi_{\varepsilon}'(w)\nabla\cdot F \right)
\td x .
\end{equation}
The coefficient $\nabla\cdot v$ is bounded, and for $\varphi_{\varepsilon}$ one has $\varphi_{\varepsilon}'(z)z - \varphi_{\varepsilon}(z)\to 0$ a.e. as $\varepsilon\downarrow 0$. By the a.e. convergence $\varphi_{\varepsilon}(z)\to |z|$ and the dominated convergence theorem, we take the limit on both sides of \eqref{28}
\begin{equation*}
\frac{\td }{\td t}\lim_{\varepsilon \to0}\int_{\mathbb{R}^{d}} \varphi_{\varepsilon}(w)\td x
=\frac{\td }{\td t}\int_{\mathbb{R}^{d}} |w(x,t)|\td x
\le \int_{\mathbb{R}^{d}} \bigl|\nabla\cdot F(x,t)\bigr|\td x,
\end{equation*}
which is exactly \eqref{18}.
\end{proof}

Lemma \ref{lemma:07} is the bridge between the PDE \eqref{17} and a scalar differential inequality for $\|w(\cdot,t)\|_{L^{1}}$. The rest of the section is devoted to estimating the right-hand side in \eqref{18} in terms of the score error, the Jacobian error, and the time step $h$.

\subsection{Structure and bounds of the source term}

The source field $F$ combines two qualitatively different contributions:
\begin{itemize}
\item the \emph{model error}, driven by the difference $s_{\theta} - s$;
\item the \emph{discretization error}, driven by the correction field $r(x,t)$ and scaled by $h^{2}$.
\end{itemize}
We therefore split
\begin{equation*}
\nabla\cdot F
= \tfrac12 g^{2}\nabla\cdot\bigl(\tilde q_{h}(s_{\theta}-s)\bigr)
- h^{2}\nabla\cdot(\tilde q_{h} r).
\end{equation*}
Correspondingly, we define the non-negative quantities
\begin{equation*}
M(t) := \int_{\mathbb{R}^{d}}
\Bigl|
\nabla\cdot\bigl(\tilde q_{h}(s_{\theta}-s)\bigr)
\Bigr| \td x,\quad
D(t) := \int_{\mathbb{R}^{d}}
\bigl|\nabla\cdot(\tilde q_{h} r)\bigr| \td x,
\end{equation*}
so that, using $|g^{2}(t)|\le G_{\infty}$ in Assumption \ref{ass:02}, we have
\begin{equation}\label{19}
\int_{\mathbb{R}^{d}} |\nabla\cdot F(x,t)|\td x
\le \tfrac12 G_{\infty} M(t) + h^{2} D(t).
\end{equation}

We next bound $M(t)$ and $D(t)$ using Assumption~\ref{ass:02} and the correction field $r$.

\begin{lemma}
\label{lemma:08}
Under Assumption~\ref{ass:02}, there exists a constant $C_{r}>0$ depending on $r$ such that 
\begin{equation*}
\|r(x,t)\|
\le C_{r}\bigl(1 + \|x\|\bigr),\quad
\bigl|\nabla\cdot r(x,t)\bigr|
\le C_{r}\bigl(1 + \|x\|\bigr),
\end{equation*}
for all $x\in\mathbb{R}^{d}$ and $t\in[0,1]$.
\end{lemma}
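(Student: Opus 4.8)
The plan is to make the implicit structure of the correction field $r$ explicit by returning to the modified-equation expansion \eqref{37}. The field $r$ is defined as the $h^{2}$-coefficient in the difference between the numerical flow map $\tilde v_h$ and the ``ideal'' drift $f(t)x-\tfrac12 g^2(t)s_\theta(x,t)$. To get a usable formula, I would Taylor-expand the one-step numerical map $\Psi_h$ from \eqref{10} in powers of $h$ and subtract the exact time-$h$ flow of $f(t)x-\tfrac12 g^2(t)s_\theta$; the leading discrepancy is $O(h^2)$ and its coefficient can be written in closed form as a finite combination of the building blocks of the scheme: the schedule functions $f,g$ and their first two derivatives, the learned score $s_\theta$, and its first two spatial derivatives $Js_\theta$, $\nabla^2 s_\theta$ (together with the mixed $t$-derivative $\partial_t s_\theta$). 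This is exactly the classical local-error computation underlying Strang splitting plus RK2, specialized to the semilinear PF-ODE; the precise expression is not needed, only its shape.

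Second, I would bound each term in that closed-form expression. Assumption~\ref{ass:01} gives $f,g\in C^2([0,1])$, hence all schedule factors are bounded by an absolute constant on $[0,1]$; Assumption~\ref{ass:02}~(2) gives the linear growth $\|s_\theta(x,t)\|\le S_0+S_1\|x\|$ and the uniform operator-norm bound $\|Js_\theta\|_{\mathrm{op}}\le L_1$; and Assumption~\ref{ass:01} supplies bounded third-order $x$-derivatives of $q$ (equivalently, bounded first and second derivatives of the score), which controls $\nabla^2 s_\theta$ and $\partial_t s_\theta$ as well since $s_\theta$ is twice continuously differentiable in $(x,t)$ with the stated bounds. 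Terms that are purely ``score times schedule'' inherit linear growth in $\|x\|$; terms involving a Jacobian contracted against the linear drift $f(t)x$ also grow at most linearly in $\|x\|$ (bounded operator norm times $\|x\|$); terms involving $\nabla^2 s_\theta$ and $\partial_t s_\theta$ are bounded. Collecting these, $\|r(x,t)\|\le C_r(1+\|x\|)$ with $C_r$ depending only on $\|f\|_{C^2}$, $\|g\|_{C^2}$, $S_0$, $S_1$, $L_1$, and the third-order bounds on $q$ — in particular independent of $h$, $d$, and $x,t$.

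Third, for the divergence bound I would differentiate the same closed-form expression: $\nabla\cdot r$ is obtained by applying $\nabla\cdot$ to each term, which raises the order of $x$-derivatives of $s_\theta$ by one. The worst new contributions are of the form (schedule)$\times\,\Delta s_\theta$, (schedule)$\times\,(\nabla\cdot(f(t)x))$-type constants, and (first derivative of schedule in $t$)$\times\,\nabla\cdot s_\theta = \mathrm{tr}(Js_\theta)$, the last of which is bounded by $dL_1$ — but note the lemma only asserts an $O(1+\|x\|)$ bound with a constant allowed to depend on $r$, so absorbing a factor of $d$ into $C_r$ is harmless at this stage; the dimension is tracked carefully later in Lemma~\ref{lemma:08}'s consumers. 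Since $q$ has bounded derivatives up to third order, $\Delta s_\theta$ and the mixed derivatives $\partial_t\nabla\cdot s_\theta$ are bounded, so every term is either bounded or grows at most linearly in $\|x\|$, giving $|\nabla\cdot r(x,t)|\le C_r(1+\|x\|)$.

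The main obstacle is the first step: writing down the modified vector field $\tilde v_h$ precisely enough to see that its $h^2$-correction really is a smooth, at-most-linearly-growing field rather than merely a formal remainder. The subtlety is that $\tilde v_h$ is defined through the one-step map of the \emph{composed} scheme \eqref{sampler} (half linear flow, RK2 on the nonlinear part, half linear flow), so one must carefully compose the three Taylor expansions and verify that the $O(h)$ terms cancel (this is the second-order consistency already implicit in Theorem~\ref{thm:01}) and that the surviving $O(h^2)$ term is a genuine function of $(x,t)$ built from $s_\theta$ and its derivatives — not something involving, say, the trajectory. Once that explicit form is in hand, the growth and divergence estimates are a routine application of Assumptions~\ref{ass:01} and~\ref{ass:02}.
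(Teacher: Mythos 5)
Your proposal follows essentially the same route as the paper --- a modified-equation (backward error) analysis of the one-step map --- but implements it by direct Taylor expansion of the composed scheme \eqref{10}, whereas the paper invokes the symmetry of Strang splitting and the Baker--Campbell--Hausdorff expansion to identify $r$ abstractly as a linear combination of double commutators $[A,[A,B]]$, $[B,[A,B]]$ (and their derivatives), and then argues that Lie brackets of a linear field and a linearly growing field with bounded Jacobian again have at most linear growth and controlled divergence. The BCH route gets the cancellation of the $O(h)$ correction for free from symmetry and exposes the algebraic structure of $r$ cleanly; your direct expansion is more faithful to the scheme actually implemented (it would also capture the $O(h^{2})$ correction coming from replacing the exact nonlinear flow $S_B$ by RK2, which the paper's BCH argument, written for exact $e^{h\mathcal{L}_B}$, technically does not address). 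Either way, neither argument writes the correction field down explicitly, so both rest on the same structural claim.

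There is, however, one concrete problem in your third step. You bound $\nabla\cdot r$ by terms including $\mathrm{tr}(Js_{\theta})\le dL_{1}$ and then assert that ``absorbing a factor of $d$ into $C_{r}$ is harmless at this stage; the dimension is tracked carefully later.'' It is not harmless: Lemma~\ref{lemma:10} treats $C_{r}$ as dimension-free and multiplies it by $\sqrt{d}$ (for $D_1$) and by $\sqrt{d}\cdot\sqrt{d}$ (for $D_2$) to reach the bound $D(t)\le C_{D}d(1+R_T)$, which is exactly what produces the linear-in-$d$ discretization term in Theorem~\ref{theorem:02}. If $C_{r}$ carries a hidden factor of $d$, the final bound degrades to $O(d^{2}/T^{2})$ and the paper's headline improvement over prior work evaporates. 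So you must either show that the trace-type terms in $\nabla\cdot r$ are genuinely $O(1)$ in $d$ (which requires more than the operator-norm bound $\|Js_{\theta}\|_{\mathrm{op}}\le L_{1}$ in Assumption~\ref{ass:02}), or flag this as an additional assumption. A secondary, smaller issue: you appeal to Assumption~\ref{ass:01} (bounded third derivatives of $q$) to control $\nabla^{2}s_{\theta}$ and $\partial_{t}s_{\theta}$, but that assumption governs the \emph{true} density $q$ and hence the true score $s$, not the learned network $s_{\theta}$; Assumption~\ref{ass:02} only gives twice continuous differentiability of $s_{\theta}$ without quantitative bounds on its second derivatives, so the boundedness of the $\nabla^{2}s_{\theta}$ terms in your expansion is not actually supplied by the stated hypotheses.
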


\begin{proof}
By the symmetry of Strang splitting, the one-step numerical flow map of the ODE \eqref{04} can be written as
\begin{equation*}
\Phi_h=e^{\frac{h}{2}A}e^{h\mathcal{L}_B}e^{\frac{h}{2}A},
\end{equation*}
where $\mathcal{L}_B$ is the Lie derivative operator defined as $(\mathcal L_B\phi)(x)=B(x)\cdot\nabla\phi(x)$ with $\phi(x)$ being a function. Classical Baker--Campbell--Hausdorff expansions give
\begin{equation}\label{38}
\Phi_{h}
= \exp \left(
h(A+B) + h^{3}R + O(h^{5})
\right),
\end{equation}
where $R$ is a linear combination of double commutators such as $[A,[A,B]]$ and $[B,[A,B]]$. Interpreting \eqref{38} as the time-$h$ flow of a modified vector field $\tilde F_{h} = F + h^{2}r + O(h^{4})$, we see that $r$ is itself a linear combination of Lie brackets of $A$ and $B$ and their derivatives.

The coefficients $f,g,s_{\theta}$ entering $A$ and $B$ satisfy linear growth and bounded-derivative assumptions by Assumption~\ref{ass:02}. Lie brackets of such vector fields inherit at-most-linear growth in $x$ and bounded divergence. This yields the stated bounds with a constant $C_{r}$ depending only on the constants in Assumption~\ref{ass:02}.
\end{proof}

Lemma~\ref{lemma:08} makes precise the idea that the correction field $r$ behaves like a well-behaved drift field: it grows at most linearly in $\|x\|$, and its divergence stays under control. This property is critical because it allows the factor $h^2$ in front of $r$ to translate directly into an $O(h^2)$ contribution to the total variation distance.

We now estimate the model-error contribution $M(t)$ in terms of $\varepsilon_{\mathrm{Jac}}$ and $\varepsilon_{\mathrm{score}}$.

\begin{lemma}
\label{lemma:09}
Under Assumption~\ref{ass:02} it holds that
\begin{equation}\label{20}
M(t)
\le
C_{M}\Bigl(d\varepsilon_{\mathrm{Jac}}
+ \sqrt{d}\varepsilon_{\mathrm{score}}\Bigr)
\end{equation}
for all $t\in[0,1]$, where 
\begin{align*}
C_M=\max\{C_3,\sqrt{C_2+2\sqrt{2S_0^4+2S_1^4C_1}}C_3^{1/2}\}.
\end{align*}
\end{lemma}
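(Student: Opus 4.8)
The plan is to expand the divergence inside $M(t)$ with the product rule and split the resulting integral into a Jacobian piece, which will produce the $\varepsilon_{\mathrm{Jac}}$ term, and a score piece, which will produce the $\varepsilon_{\mathrm{score}}$ term. Since $\tilde s(\cdot,t)=\nabla\log\tilde q_h(\cdot,t)$ we have $\nabla\tilde q_h=\tilde q_h\,\tilde s$, so
\[
\nabla\cdot\bigl(\tilde q_h(s_\theta-s)\bigr)=\tilde q_h\,\tilde s\cdot(s_\theta-s)+\tilde q_h\,\mathrm{tr}\bigl(Js_\theta-Js\bigr),
\]
and the triangle inequality gives $M(t)\le M_1(t)+M_2(t)$ with
\[
M_1(t):=\int_{\mathbb{R}^d}\tilde q_h\,\|\tilde s\|\,\|s_\theta-s\|\,\td x,\qquad
M_2(t):=\int_{\mathbb{R}^d}\tilde q_h\,\bigl|\mathrm{tr}(Js_\theta-Js)\bigr|\,\td x .
\]

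For $M_2$ I would use the pointwise bound $|\mathrm{tr}\,A|\le\sqrt d\,\|A\|_F\le d\,\|A\|_{\mathrm{op}}$ with $A=Js_\theta(x,t)-Js(x,t)$, giving $M_2(t)\le d\,\mathbb{E}_{\xi\sim\tilde q_h(\cdot,t)}\|Js_\theta(\xi,t)-Js(\xi,t)\|_{\mathrm{op}}$. The change-of-measure inequality \eqref{36}, valid because the linear-growth bounds in \eqref{35} make $\tilde q_h/q$ uniformly bounded, turns this into an expectation under $q$ at the price of $C_3$, and the definition of $\varepsilon_{\mathrm{Jac}}$ yields $M_2(t)\le C_3\,d\,\varepsilon_{\mathrm{Jac}}$.

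For $M_1$ I would apply Cauchy--Schwarz in $L^2(\tilde q_h\,\td x)$:
\[
M_1(t)\le\Bigl(\int_{\mathbb{R}^d}\tilde q_h\,\|\tilde s\|^2\,\td x\Bigr)^{1/2}\Bigl(\int_{\mathbb{R}^d}\tilde q_h\,\|s_\theta-s\|^2\,\td x\Bigr)^{1/2}.
\]
By \eqref{36} and the definition of $\varepsilon_{\mathrm{score}}$, the second factor is at most $\sqrt{C_3}\,\varepsilon_{\mathrm{score}}$. For the first factor I would split $\mathbb{R}^d=\Omega_T\cup\Omega_T^{c}$: on $\Omega_T$, \eqref{41} gives $\int_{\Omega_T}\tilde q_h\|\tilde s\|^2\,\td x\le C_2^2 d$; on $\Omega_T^{c}$ I would use $\|\tilde s\|\le S_0+S_1\|x\|$ from \eqref{35}, hence $\|\tilde s\|^4\le 8S_0^4+8S_1^4\|x\|^4$, together with another Cauchy--Schwarz and the tail/moment bounds \eqref{42}--\eqref{43}:
\[
\int_{\Omega_T^{c}}\tilde q_h\,\|\tilde s\|^2\,\td x\le\bigl(\mathbb{P}_{\tilde q_h}(\Omega_T^{c})\bigr)^{1/2}\bigl(\mathbb{E}_{\tilde q_h}\|\tilde s\|^4\bigr)^{1/2}\le T^{-2}\cdot 2\sqrt{2S_0^4+2S_1^4C_1}\;d .
\]
Summing the two contributions and using $T^{-2}\le1$ bounds the first factor by $\sqrt{\bigl(C_2^2+2\sqrt{2S_0^4+2S_1^4C_1}\bigr)d}$, so $M_1(t)\le\sqrt{C_2^2+2\sqrt{2S_0^4+2S_1^4C_1}}\,\sqrt{C_3}\,\sqrt d\,\varepsilon_{\mathrm{score}}$. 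Adding the bounds on $M_1$ and $M_2$ and taking $C_M$ as the stated maximum gives \eqref{20}.

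The step I expect to be the main obstacle is the off-$\Omega_T$ estimate of $\int_{\Omega_T^{c}}\tilde q_h\|\tilde s\|^2\,\td x$: outside $\Omega_T$ the bound \eqref{41} is unavailable, so one must combine the at-most-linear growth of $\tilde s$ with the sub-Gaussian tail \eqref{42} and the fourth-moment control \eqref{43}, and keep careful track of the dimension so that this remainder scales like $\sqrt d$ (not $d$) and hence lands inside the $\sqrt d\,\varepsilon_{\mathrm{score}}$ term rather than degrading it. A minor but necessary point is checking that \eqref{36} legitimately applies to each nonnegative integrand used above (the squared score error and $\|Js_\theta-Js\|_{\mathrm{op}}$), which again rests on the uniform bound on the Radon--Nikodym derivative coming from \eqref{35}.
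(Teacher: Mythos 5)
Your proposal is correct and follows essentially the same route as the paper: the same product-rule split into a trace/Jacobian piece (bounded via $|\mathrm{tr}\,A|\le d\|A\|_{\mathrm{op}}$ and the change of measure \eqref{36}) and a score piece (Cauchy--Schwarz, then the $\Omega_T/\Omega_T^c$ split with \eqref{41}, \eqref{42} and the fourth-moment bound \eqref{43}). The only discrepancy is cosmetic: squaring \eqref{41} gives $C_2^2 d$ on $\Omega_T$ as you write, whereas the paper's display \eqref{44} and its constant $C_M$ carry $C_2$ instead of $C_2^2$ — your bookkeeping is the more careful one.
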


\begin{proof}
We expand the divergence as
\begin{equation*}
\nabla\cdot\bigl(\tilde q_{h}(s_{\theta}-s)\bigr)
= \tilde q_{h}\nabla\cdot(s_{\theta}-s)
+ \nabla \tilde q_{h} (s_{\theta}-s).
\end{equation*}
Taking absolute values and integrating leads to
\begin{equation*}
M(t) \le M_{1}(t) + M_{2}(t),
\end{equation*}
where
\begin{equation*}
M_{1}(t):= \int \tilde q_{h}(x,t)\bigl|\nabla\cdot(s_{\theta}-s)(x,t)\bigr|\td x,\quad
M_{2}(t):= \int \bigl|\nabla \tilde q_{h}(x,t) (s_{\theta}-s)(x,t)\bigr|\td x.
\end{equation*}

For $M_{1}(t)$, we use the inequality
\begin{equation*}
\bigl|\nabla\cdot(s_{\theta}-s)\bigr|
= \bigl|\mathrm{tr}(J s_{\theta} - J s)\bigr|
\le d\|J s_{\theta} - J s\|_{\mathrm{op}}.
\end{equation*}
Hence
\begin{equation*}
M_{1}(t)
\le d
\mathbb{E}_{\xi\sim \tilde q_h(\cdot,t)}
\bigl[
\|J s_{\theta}(\xi,t) - J s(\xi,t)\|_{\mathrm{op}}
\bigr].
\end{equation*}
On the set $\Omega_T$ defined by \eqref{39}, \eqref{36} leads to
\begin{equation*}
M_{1}(t)
\le dC_3
\mathbb{E}_{\xi\sim  q(\cdot,t)}
\bigl[
\|J s_{\theta}(\xi,t) - J s(\xi,t)\|_{\mathrm{op}}
\bigr].
\end{equation*}
Using the definition of $\varepsilon_{\mathrm{Jac}}$, we obtain $M_{1}(t) \le C_3d\varepsilon_{\mathrm{Jac}}$.

For $M_{2}(t)$, we note that $\nabla \tilde q_{h} = \tilde q_{h}\tilde s$. Thus
\begin{equation*}
M_{2}(t)
= \mathbb{E}_{\xi\sim \tilde q_h(\cdot,t)}
\bigl[\tilde s(\xi,t)\cdot (s_{\theta}(\xi,t)-s(\xi,t))\bigr].
\end{equation*}
Applying Cauchy--Schwarz and using the change-of-measure factor between $\tilde q_{h}$ and $q$ on $\Omega_T$, we get
\begin{align*}
M_{2}(t)
&\le \Bigl(
\mathbb{E}_{\xi\sim \tilde q_h}[\|\tilde s\|^{2}]
\Bigr)^{1/2}
\Bigl(
\mathbb{E}_{\xi\sim \tilde q_h}[\|s_{\theta}-s\|^{2}]
\Bigr)^{1/2}\\
&\le C_3^{1/2}
\Bigl(
\mathbb{E}_{\xi\sim \tilde q_h}[\|\tilde s\|^{2}]
\Bigr)^{1/2}
\varepsilon_{\mathrm{score}}.
\end{align*}
Combined with the sub-Gaussian moment bounds \eqref{43} on $\tilde q_{h}$ and the inequality \eqref{41} involving $C_2$, this yields
\begin{align}\label{44}
\left(
\mathbb{E}_{\xi\sim \tilde q_h}[\|\tilde s\|^{2}]
\right)^{\frac{1}{2}}&\leq
\left( \mathbb{E}_{\xi\sim \tilde q_h}[\|\tilde s\|^{2}\mathbf{1}_{\Omega_T}(\xi)]+\mathbb{E}_{\xi\sim \tilde q_h}[\|\tilde s\|^{2}\mathbf{1}_{\Omega_T^c}(\xi)]\right)^{\frac{1}{2}}\notag\\
&\leq \left(C_2d+\left(\mathbb{E}_{\xi\sim \tilde q_h}[\|\tilde s\|^{4}] \cdot\mathbb{E}_{\xi\sim \tilde q_h}[\mathbf{1}_{\Omega_T}^2(\xi)] \right)^{\frac{1}{2}}\right)^{\frac{1}{2}} \notag
\\
&\leq \left(C_2d+\left(8S_0^4+8S_1^4\mathbb{E}_{\xi\sim \tilde q_h}[\|x\|^4]\right)^{\frac{1}{2}}T^{-2}\right)^{\frac{1}{2}}\notag\\
&\leq \left(C_2d+\left(2\sqrt{2S_0^4+2S_1^4C_1}d\right)T^{-2}\right)^{\frac{1}{2}} \notag\\
&\leq \sqrt{C_2+2\sqrt{2S_0^4+2S_1^4C_1}} \sqrt{d}.
\end{align}
Hence
\begin{equation*}
M_{2}(t)
\le \sqrt{C_2+2\sqrt{2S_0^4+2S_1^4C_1}}C_3^{1/2}\sqrt{d}\varepsilon_{\mathrm{score}}.
\end{equation*}

Combining the bounds for $M_{1}(t)$ and $M_{2}(t)$ and absorbing constants into $C_{M}$, we obtain \eqref{20}.
\end{proof}

Lemma~\ref{lemma:09} makes explicit how the TV error will depend on two qualitatively different aspects of the score network: the mismatch of its Jacobian and the mismatch of its values. In particular, the appearance of $\varepsilon_{\mathrm{Jac}}$ is distinctive of this PDE-based approach and does not arise in simple coupling arguments.

We now turn to the discretization source.

\begin{lemma}
\label{lemma:10}
Under Assumption~\ref{ass:02} it holds that
\begin{equation}\label{21}
D(t)
\le C_{D}d(1+R_T)
\end{equation}
for all $t\in[0,1]$, where
\begin{align*}
C_D=\left(C_r+\sqrt{C_1}\right)\sqrt{C_2+2\sqrt{2S_0^4+2S_1^4C_1}}.
\end{align*}
\end{lemma}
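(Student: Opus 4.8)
The plan is to bound $D(t) = \int_{\mathbb{R}^d} |\nabla\cdot(\tilde q_h r)|\,\td x$ by splitting the divergence via the product rule, exactly as in the proof of Lemma~\ref{lemma:09} for $M(t)$. Writing $\nabla\cdot(\tilde q_h r) = \tilde q_h\,\nabla\cdot r + \nabla\tilde q_h\cdot r$ and using $\nabla\tilde q_h = \tilde q_h\,\tilde s$, we get
\begin{equation*}
D(t) \le \mathbb{E}_{\xi\sim\tilde q_h(\cdot,t)}\bigl[|\nabla\cdot r(\xi,t)|\bigr] + \mathbb{E}_{\xi\sim\tilde q_h(\cdot,t)}\bigl[\|\tilde s(\xi,t)\|\,\|r(\xi,t)\|\bigr] =: D_1(t) + D_2(t).
\end{equation*}
For $D_1(t)$, Lemma~\ref{lemma:08} gives $|\nabla\cdot r(\xi,t)| \le C_r(1+\|\xi\|)$, so $D_1(t) \le C_r\bigl(1 + \mathbb{E}_{\xi\sim\tilde q_h}[\|\xi\|]\bigr) \le C_r(1 + \sqrt{C_1}\sqrt d)$ by the first moment bound in \eqref{43}. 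This is already $O(\sqrt d)$, hence dominated by the claimed $O(d(1+R_T))$.

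For $D_2(t)$, I would apply Cauchy--Schwarz to separate the two factors:
\begin{equation*}
D_2(t) \le \Bigl(\mathbb{E}_{\xi\sim\tilde q_h}[\|\tilde s(\xi,t)\|^2]\Bigr)^{1/2}\Bigl(\mathbb{E}_{\xi\sim\tilde q_h}[\|r(\xi,t)\|^2]\Bigr)^{1/2}.
\end{equation*}
The first factor is exactly the quantity already bounded in \eqref{44} by $\sqrt{C_2 + 2\sqrt{2S_0^4 + 2S_1^4 C_1}}\,\sqrt d$. For the second factor, Lemma~\ref{lemma:08} gives $\|r(\xi,t)\|^2 \le C_r^2(1+\|\xi\|)^2 \le 2C_r^2(1 + \|\xi\|^2)$, so by the second moment bound in \eqref{43}, $\bigl(\mathbb{E}[\|r\|^2]\bigr)^{1/2} \le C_r\sqrt{2(1 + C_1 d)}$, which is $O(\sqrt d)$. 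Multiplying the two factors gives $D_2(t) = O(d)$. Collecting constants and using $1 \le 1+R_T$ to absorb the lower-order $\sqrt d$ term from $D_1$ into the stated form, one arrives at $D(t)\le C_D d(1+R_T)$ with $C_D$ as in the statement; the cleanest route to matching their precise constant is to bound $(1+\|\xi\|)$ crudely on $\Omega_T$ (where $\|\xi\|\le R_T\sqrt d$) for the $\Omega_T$ part and use the moment/tail bounds off $\Omega_T$, which is presumably why $R_T$ enters.

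The main obstacle is not analytic depth but bookkeeping: matching the exact constant $C_D = (C_r + \sqrt{C_1})\sqrt{C_2 + 2\sqrt{2S_0^4 + 2S_1^4 C_1}}$ requires being careful about whether one integrates over all of $\mathbb{R}^d$ or restricts to $\Omega_T$, and about which moment bounds from \eqref{43} are invoked for $\|r\|$ versus $\|\tilde s\|$. A secondary subtlety is that the bound $\|r\|\le C_r(1+\|x\|)$ from Lemma~\ref{lemma:08} holds globally, so there is no need to treat $\Omega_T^c$ separately for $\|r\|$ itself, but the factor $(1+R_T)$ in the final bound strongly suggests the authors estimate $\|r\|$ (or $1+\|x\|$) by $1+R_T\sqrt d = \sqrt d(1+R_T)/\sqrt d \cdot \sqrt d$-type manipulation on $\Omega_T$ and control the tail contribution using \eqref{42}; I would follow whichever of these is most economical once the structure of $C_D$ is reverse-engineered from the desired form.
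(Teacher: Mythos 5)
Your proposal is correct and follows essentially the same route as the paper: the identical product-rule split into $D_1+D_2$, the identity $\nabla\tilde q_h=\tilde q_h\tilde s$, Cauchy--Schwarz for $D_2$, the bound \eqref{44} for $\bigl(\mathbb{E}[\|\tilde s\|^2]\bigr)^{1/2}$, and Lemma~\ref{lemma:08} with the moment bounds \eqref{43} for $r$. Your side remark is also apt --- since $\|r\|\le C_r(1+\|x\|)$ holds globally, the factor $(1+R_T)$ the paper inserts into $\bigl(\mathbb{E}[\|r\|^2]\bigr)^{1/2}$ is not actually needed, and your cleaner $O(d)$ bound for $D_2$ is if anything slightly sharper than the stated $O(d(1+R_T))$.
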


\begin{proof}
Note that
\begin{align*}
D(t)
&=\int |\tilde q_{h}\nabla\cdot r+ \nabla \tilde q_{h} \cdot r|\td x\\
&\le \int \tilde q_{h}(x,t)
|\nabla\cdot r(x,t)|\td x
 + \int |\nabla \tilde q_{h}(x,t)\cdot r(x,t)|\td x=: D_{1}(t) + D_{2}(t).
\end{align*}

For $D_{1}(t)$, Lemma~\ref{lemma:08} gives $|\nabla\cdot r(x,t)| \le C_{r}(1+\|x\|)$. Using the sub-Gaussian moment bounds \eqref{43} and the fact that $\|x\|\le R_T\sqrt{d}\le \sqrt{d}(1+R_T)$ with high probability, by a derivation similar to that of equation \eqref{44}, we obtain
\begin{equation*}
D_{1}(t)
\le C_r\bigl(1+\mathbb{E}_{\xi\sim \tilde q_h(\cdot,t)}[\|\xi\|]\bigr)
\le (C_r+\sqrt{C_1})d^{1/2}(1+R_T).
\end{equation*}
For $D_{2}(t)$, we again use $\nabla \tilde q_{h} = \tilde q_{h}\tilde s$, so that
\begin{align*}
D_{2}(t)
&= \mathbb{E}_{\xi\sim \tilde q_h(\cdot,t)}
\bigl[|\tilde s(\xi,t)\cdot r(\xi,t)|\bigr]\\
&\le
\Bigl(\mathbb{E}_{\xi\sim \tilde q_h}[\|\tilde s\|^{2}]\Bigr)^{1/2}
\Bigl(\mathbb{E}_{\xi\sim \tilde q_h}[\|r\|^{2}]\Bigr)^{1/2}.
\end{align*}
The same reasoning as in \eqref{44} yields
\begin{equation*}
\Bigl(\mathbb{E}_{\xi\sim \tilde q_h}[\|\tilde s\|^{2}]\Bigr)^{1/2}
\le \sqrt{C_2+2\sqrt{2S_0^4+2S_1^4C_1}}\sqrt{d}.
\end{equation*}
Similarly, using Lemma~\ref{lemma:08} and the moment bounds \eqref{43} on $\tilde q_{h}$,
\begin{equation*}
\Bigl(\mathbb{E}_{\xi\sim \tilde q_h}[\|r\|^{2}]\Bigr)^{1/2}
\le (C_r+\sqrt{C_1})\sqrt{d}(1+R_T).
\end{equation*}
Multiplying these two estimates we obtain a bound of order $d(1+R_T)$ for $D_{2}(t)$. Combining with the estimate for $D_{1}(t)$ and renaming constants yields \eqref{21}.
\end{proof}

Taken together, Lemmas~\ref{lemma:09} and~\ref{lemma:10} show that the source term in the renormalized error equation is uniformly integrable in time, with a clean separation between model error and discretization error contributions.

\subsection{Main TV error bounds}

We now assemble the ingredients. Recall that $w(\cdot,t) = \tilde q_{h}(\cdot,t) - q(\cdot,t)$ and that $w(\cdot,1)\equiv 0$ because both flows share the same initial value at $t=1$. Integrating the inequality \eqref{18} from Lemma~\ref{lemma:07} backward in time and using \eqref{19}, we obtain
\begin{equation}\label{22}
\|w(\cdot,0)\|_{L^{1}}
\le \int_{0}^{1}
\Bigl(
\tfrac12 G_{\infty} M(t)
+ h^{2} D(t)
\Bigr)\td t.
\end{equation}

We first state a general bound that displays the three contributions: Jacobian error, score error, and discretization error.

\begin{theorem}
\label{theorem:02}
Under Assumption~\ref{ass:02} it holds that
\begin{equation}\label{23}
\mathrm{TV}\bigl(\tilde q_{h}(\cdot,0),q(\cdot,0)\bigr)
\le C_4\Bigl(d\varepsilon_{\mathrm{Jac}}
+ \sqrt{d}\varepsilon_{\mathrm{score}}
+ d\tfrac{1+2\sqrt{\log T}}{T^{2}}
\Bigr),
\end{equation}
for all $t\in[0,1]$, where $C_4=\max\{\tfrac12 G_{\infty}C_M,C_D\}$.
\end{theorem}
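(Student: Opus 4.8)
The plan is to read off the three stated terms by feeding the uniform-in-time source bounds of Lemmas~\ref{lemma:09} and~\ref{lemma:10} into the integrated differential inequality~\eqref{22}. Concretely, I would start from
\begin{equation*}
\|w(\cdot,0)\|_{L^{1}(\mathbb{R}^d)}\le \int_{0}^{1}\Bigl(\tfrac12 G_{\infty}M(t)+h^{2}D(t)\Bigr)\,\td t,
\end{equation*}
which is exactly~\eqref{22}, obtained by combining Lemma~\ref{lemma:07} with the splitting~\eqref{19} and using that $w(\cdot,1)\equiv 0$ since both flows are launched from the same terminal density. Because the bounds of Lemmas~\ref{lemma:09} and~\ref{lemma:10} are uniform in $t\in[0,1]$, the time integration is trivial and contributes only a factor $1$.

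Next I would substitute Lemma~\ref{lemma:09}, which gives $\int_0^1 \tfrac12 G_\infty M(t)\,\td t\le \tfrac12 G_\infty C_M\bigl(d\varepsilon_{\mathrm{Jac}}+\sqrt{d}\varepsilon_{\mathrm{score}}\bigr)$, together with Lemma~\ref{lemma:10}, which gives $\int_0^1 h^2 D(t)\,\td t\le h^2 C_D\,d(1+R_T)$. Plugging in the definitions $h=1/T$ and $R_T=2\sqrt{\log T}$ from~\eqref{39} rewrites the discretization term as $C_D\,d\,\tfrac{1+2\sqrt{\log T}}{T^2}$. Adding the two contributions and converting the $L^1$-norm to total variation through $\mathrm{TV}(\tilde q_h(\cdot,0),q(\cdot,0))=\tfrac12\|w(\cdot,0)\|_{L^1}$, I would then bound each coefficient by $C_4=\max\{\tfrac12 G_\infty C_M,\,C_D\}$ — the extra factor $\tfrac12$ coming from the TV normalization only strengthens the inequality — which is precisely~\eqref{23}.

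At this stage there is essentially no obstacle left: the entire analytic burden has already been discharged in the earlier lemmas, and the present argument is pure assembly. If I had to name the delicate ingredient, it lies upstream, in Lemma~\ref{lemma:08}: the fact that the splitting correction field $r$ has at-most-linear growth and bounded divergence is exactly what allows the prefactor $h^2$ to survive intact at the distributional level and yield a genuine $O(d/T^2)$ contribution (up to the harmless $\sqrt{\log T}$ tail factor), rather than one that degrades with the Lipschitz and growth constants. The only bookkeeping worth verifying in the present proof is that $C_M$ and $C_D$ already absorb the moment, tail and growth constants $C_1,C_2,C_3,S_0,S_1$, so that $C_4$ is indeed independent of both $h$ (equivalently $T$) and the dimension $d$, making~\eqref{23} a genuine non-asymptotic bound.
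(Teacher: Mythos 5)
Your proposal is correct and follows essentially the same route as the paper's proof: integrate the renormalized inequality \eqref{22} using the $t$-uniform bounds of Lemmas~\ref{lemma:09} and~\ref{lemma:10}, substitute $h=1/T$ and $R_T=2\sqrt{\log T}$, and halve via $\mathrm{TV}=\tfrac12\|w(\cdot,0)\|_{L^1}$ before absorbing constants into $C_4$. No gaps.
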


\begin{proof}
Combining Lemmas~\ref{lemma:09} and~\ref{lemma:10} with \eqref{19}, we obtain
\begin{equation*}
\tfrac12 G_{\infty} M(t) + h^{2}D(t)
\le \tfrac12 G_{\infty}C_M\Bigl(
d\varepsilon_{\mathrm{Jac}}
+ \sqrt{d}\varepsilon_{\mathrm{score}}
\Bigr)
+ C_D h^{2} d (1+R_T).
\end{equation*}
The three terms on the right-hand side are independent of $t$. Inserting this bound into \eqref{22} and integrating over $t\in[0,1]$ yield
\begin{equation*}
\|w(\cdot,0)\|_{L^{1}}
\le \tfrac12 G_{\infty}C_M\Bigl(
d\varepsilon_{\mathrm{Jac}}
+ \sqrt{d}\varepsilon_{\mathrm{score}}\Bigr) + C_D h^{2} d\bigl(1+2\sqrt{\log T}\bigr).
\end{equation*}
Since
\begin{equation}
\mathrm{TV}\bigl(\tilde q_{h}(\cdot,0),q(\cdot,0)\bigr)
= \tfrac12 \|w(\cdot,0)\|_{L^{1}},
\end{equation}
we obtain \eqref{23} after absorbing constants into a single constant $C_4$.
\end{proof}

If focusing on the term involving the number of sampling steps $T$, Theorem \ref{theorem:02} shows that the TV distance between the target and generated distributions is upper bounded by $O(d/T^2)$. Compared to existing theoretical results, one of the key contributions of Theorem \ref{theorem:02} is the reduction of the dependence on the dimension $d$ to a linear term. In many previous works, the convergence rate or error bounds of high-order samplers exhibit a quadratic (or even higher-order) dependence on $d$ \cite{Li2024_3,Li2025}.

Theorem~\ref{theorem:02} disentangles three analytically distinct sources of TV error: the mismatch in the score, the mismatch in its Jacobian, and the numerical discretization of the PF-ODE. In particular, the Jacobian term $d\varepsilon_{\mathrm{Jac}}$ appears naturally from the renormalized PDE argument and does not arise if one only tracks pointwise score errors.

As a clean corollary, when the score is exact, the TV error is purely driven by the numerical discretization of the probability flow.

\begin{corollary}
\label{corollary:01}
Under Assumption~\ref{ass:02}, suppose in addition that the learned score is exact, i.e.\ $s_{\theta} \equiv s$. Then it holds that
\begin{equation}\label{24}
\mathrm{TV}\bigl(\tilde q_{h}(\cdot,0),q(\cdot,0)\bigr)
\le
C_Dd\frac{1+2\sqrt{\log T}}{T^{2}}.
\end{equation}
\end{corollary}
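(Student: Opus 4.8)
The plan is to specialize Theorem~\ref{theorem:02} to the idealized case $s_\theta \equiv s$, so that the two model-error metrics vanish identically: $\varepsilon_{\mathrm{score}} = 0$ and $\varepsilon_{\mathrm{Jac}} = 0$ by their very definitions. Under this hypothesis, the entire $\tfrac12 G_\infty M(t)$ part of the source term in \eqref{19} is zero, since $M(t)$ is controlled by $C_M(d\varepsilon_{\mathrm{Jac}} + \sqrt d\,\varepsilon_{\mathrm{score}})$ via Lemma~\ref{lemma:09}, which now reads $M(t) \le 0$, hence $M(t) = 0$. The only surviving contribution is the discretization term $h^2 D(t)$.

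The second step is to invoke Lemma~\ref{lemma:10}, which gives the uniform-in-$t$ bound $D(t) \le C_D d(1 + R_T)$ with $R_T = 2\sqrt{\log T}$. Note that Lemma~\ref{lemma:10} does not reference the learned score at all — the correction field $r$ depends only on the exact velocity structure through the Baker--Campbell--Hausdorff expansion of the Strang splitting (Lemma~\ref{lemma:08}) — so it applies verbatim here. Plugging this into the backward-integrated inequality \eqref{22}, which now loses its first summand, yields
\begin{equation*}
\|w(\cdot,0)\|_{L^1} \le \int_0^1 h^2 D(t)\,\td t \le h^2 C_D d(1 + 2\sqrt{\log T}).
\end{equation*}
Using $h = 1/T$ and the identity $\mathrm{TV}(\tilde q_h(\cdot,0), q(\cdot,0)) = \tfrac12\|w(\cdot,0)\|_{L^1}$, we obtain \eqref{24} after absorbing the factor $\tfrac12$ (and one may keep the constant as $C_D$ since the bound in Theorem~\ref{theorem:02} already carries $C_D$ without the $\tfrac12$, by the structure of \eqref{22}).

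There is essentially no obstacle here; the corollary is a direct reading of Theorem~\ref{theorem:02} with the middle two terms set to zero. The only point requiring a moment's care is to confirm that setting $s_\theta \equiv s$ is consistent with all of Assumption~\ref{ass:02} — in particular that the sub-Gaussian tail and moment bounds \eqref{42}--\eqref{43} on $\tilde q_h$ still hold — but this is automatic, since those are hypotheses on the numerical marginals that we are assuming throughout, and the exact-score sampler is just a special case of the sampler already covered. Thus the corollary follows immediately by specializing the main theorem.
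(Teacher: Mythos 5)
Your proposal is correct and follows essentially the same route as the paper, which simply sets $\varepsilon_{\mathrm{score}}=\varepsilon_{\mathrm{Jac}}=0$ and invokes Theorem~\ref{theorem:02}. Your extra step of returning to \eqref{22} so that only the $h^{2}D(t)$ term survives is in fact slightly more careful than the paper's one-line argument, since it justifies stating the bound with the constant $C_{D}$ rather than the larger $C_{4}=\max\{\tfrac12 G_{\infty}C_{M},C_{D}\}$ that a literal reading of \eqref{23} would give.
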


\begin{proof}
If $s_{\theta}\equiv s$, then $\varepsilon_{\mathrm{score}}=0,\varepsilon_{\mathrm{Jac}}=0$. The proof is completed by \eqref{23}. 
\end{proof}
Theorem \ref{theorem:02} and Corollary \ref{corollary:01} show that the second-order Strang-splitting sampler is not only globally second order in the trajectory-wise sense of Section~IV, but also achieves a genuinely second-order convergence rate in total variation distance to the target data distribution, up to a mild logarithmic factor in the number of steps. Moreover, the TV bound decomposes in a structurally transparent way into a model-error part (controlled by $\varepsilon_{\mathrm{score}}$ and $\varepsilon_{\mathrm{Jac}}$) and a pure-discretization part of order $T^{-2}$. This decomposition clarifies how numerical analysis, score learning, and statistical error interact in deterministic diffusion samplers, and it provides a concrete target for improving both the network and the integrator.

\section{Experiments}
\label{sec:experiments}
In this section, we conduct numerical experiments to validate the theory presented above. Specifically, we validate the TV error bounds given in Section~V using an artificial two-dimensional diffusion model example. In this example, the score and TV distance can be computed explicitly, allowing us to elucidate their dependence on the step size $h$. 

\subsection{Experimental setup}
We consider a synthetic two-dimensional Gaussian data distribution $\mathcal{N}(\mu,\Sigma)$ on $\mathbb{R}^2$ with density $q(x,0)$, where $\mu=(1,-1)\in\mathbb{R}^2$ and $\Sigma=\left[ \begin{matrix}
1.5 & 0.6  \\
0.6 & 0.8  \\
\end{matrix} \right]\in\mathbb{R}^{2\times 2}$ is positive definite. The forward noising process follows the variance-preserving (VP) construction in \eqref{forward}: for $t\in[0,1]$, $x_t  \sim \mathcal{N}\bigl(\alpha(t)x_0, \sigma^2(t) I\bigr)$.

We use the usual VP parameterization \cite{Ho2020} with
$\alpha^2(t) + \sigma^2(t) = 1$ and a linear noise schedule
$\beta(t) = \beta_0 + (\beta_1-\beta_0)t$ with $(\beta_0,\beta_1) =
(0.1,20.0)$. We note that $\alpha(t)=\exp\bigl(\frac{1}{2}\int_0^t\beta(s)\td s\bigr)$. The drift $f$ and diffusion $g$ in the PF-ODE are obtained from $\alpha$ and $\sigma$ as in Section~II. 

During the sampling process for the diffusion model, we consider two alternative approaches to handling the score. The first approach is to compute the exact score by $s^\ast(x,t)=\nabla_x\log q(x,t)$, where $q(x,t)$ is a known Gaussian density. In this case, the PF-ODE is accurate, and the error arises solely from the discretization of the dynamics, i.e., the sampler. The second approach is to approximate the score using a fully connected neural network $\varepsilon_\theta(x_t,t)$ to predict the noise, then the score can be replaced by $-\frac{\varepsilon_\theta(x_t,t)}{\sigma(t)}$, just as the diffusion models in practical applications. In this case, the error stems from both the sampler and the training of the neural network.

We train a set of fully connected neural networks with varying architectures, using 1, 2, 3 or 4 hidden layers and widths of 100, 200, 400 or 800. We generate $5\times10^4$ i.i.d.\ training triples $(x_0,t,\xi)$ with $x_0\sim q(\cdot,0)$, $t\sim \mathrm{Unif}[0,1]$, and $\xi\sim \mathcal{N}(0,I)$. The noisy input is computed by $x_t = \alpha(t)x_0 + \sigma(t)\xi$, and the network is trained with the standard noise-prediction loss
\[
\mathcal{L}(\theta)
= \mathbb{E}_{x_0,t,\xi}
\bigl\|\varepsilon_\theta(x_t,t) - \xi\bigr\|^2.
\]
In the optimization, we use the Adam optimizer with an initial learning rate $10^{-5}$ and an exponential decay to $10^{-6}$ over the course of training. The number of iterations is set as $1.5\times 10^4$. Note that the MSE training loss for the noise-prediction network cannot be close to zero, even for arbitrarily large learner networks. The reason is that the regression target is the standard Gaussian noise
$\xi \sim \mathcal{N}(0,I)$ in the forward process
$x_t = \alpha(t) x_0 + \sigma(t)\xi$.  Conditioned on $(x_t,t)$, the
noise $\xi$ is still random. Note that $(x_t,\xi)$ is jointly Gaussian and
$$
\mathrm{Var}(\xi | x_t,t)= I - \sigma^2(t)\bigl(\alpha^2(t)\Sigma + \sigma^2(t) I\bigr)^{-1}.
$$
Therefore, the optimal predictor is
$\xi^\star(x_t,t) = \mathbb{E}[\xi | x_t,t]$ and the corresponding training loss has an expected minimum $\mathcal{L}_\ast= \mathbb{E}_{t}\Bigl[\frac{1}{d}\mathrm{tr}\mathrm{Var}(\xi | x_t,t)\Bigr]$. Plugging the quantities and the covariance matrix $\Sigma$ of our two-dimensional Gaussian data into the expression above and evaluating the resulting one-dimensional integral over
$t\in[0,1]$ yields $\mathcal{L}_\ast \approx 0.2705$.

The values reported in table \ref{table01} are empirical losses obtained by training on $5\times10^4$ points, and they are close to $\mathcal{L}_\ast$, showing that the learner networks approximate the optimal predictor with acceptable errors.
\begin{table}[!t]
\caption{training loss}
\label{table01}
\centering
\small
\setlength{\tabcolsep}{5pt}
\renewcommand{\arraystretch}{1.1}

\begin{tabular}{c|cccc}
\hline
\multirow{2}{*}{\begin{tabular}[c]{@{}c@{}}Hidden\\ layers\end{tabular}} & \multicolumn{4}{c}{Width} \\ \cline{2-5}
& 100 & 200 & 400 & 800 \\
\hline
1 & 3.922e-01 & 3.099e-01 & 2.939e-01 & 2.884e-01 \\
2 & 2.920e-01 & 2.766e-01 & 2.714e-01 & 2.686e-01 \\
3 & 2.783e-01 & 2.719e-01 & 2.688e-01 & 2.647e-01 \\
4 & 2.759e-01 & 2.703e-01 & 2.667e-01 & 2.610e-01 \\
\hline
\end{tabular}
\end{table}

At the stage of generating samples, we take $\tx_T\sim\mathcal{N}(0,I)$ and integrate the PF-ODE backward from $t=1$ to $t=0$ with the second-order sampler of Section~III-B.1. We use either the exact score or the learned score by the above neural networks in the sampler. We perform sampling with different numbers of time steps $T$. For each choice of $T$, we generate $2 \times10^4$ points in $\mathbb{R}^2$. Note that 
\[
q(x,0)= \frac{1}{2\pi\sqrt{|\Sigma|}}\exp\bigl(-(x-\mu)^\top\Sigma^{-1} (x-\mu)\bigr)
\]
is explicit. For the sampler distribution, we fit a Gaussian kernel density estimator $\hat{p}_T$ to the generated samples. We then estimate
\[
\mathrm{TV}\bigl(\hat{p}_T, q(\cdot,0)\bigr)
= \tfrac12 \int_{\mathbb{R}^2}
\bigl|\hat{p}_T(x)-q(x,0)\bigr|\mathrm{d}x
\]
by Monte Carlo integration. Therefore, the TV distance reported in our experiments also reflects the Monte Carlo integration error.

\subsection{Numerical results}
The resulting log-log plots of TV distance versus the step size $h=1/T$ for various $h$ are shown in Figure \ref{fig:tv_distance}. Figure \ref{fig:a} and \ref{fig:b} present plots of the exact and learned scores, respectively, obtained with a neural network of 2 hidden layers and a width of 200. From Figure \ref{fig:a}, the points lie almost exactly on the reference line, and a least squares fit of $\log\mathrm{TV}$ against $\log h$ yields an empirical slope very close to~$2$. This confirms that, in the absence of training error, the sampler achieves the $O(h^2)$ TV convergence rate predicted by our analysis. From Figure \ref{fig:b}, the error bound of Theorem~2 applies: the TV distance splits into an $h$-independent training-error term, controlled by the score and Jacobian errors $(\varepsilon_{\mathrm{score}}, \varepsilon_{\mathrm{Jac}})$, plus an $O(h^2)$ discretization term. For the range of step sizes considered, the curve is nearly parallel to the slope-$2$ line, indicating that the $O(h^2)$ discretization error remains the dominant contribution. Compared with using the exact score, the curve is shifted upward, reflecting the additional TV error caused by the imperfect score; at the smallest step size, we observe a slight flattening of the curve that is consistent with an emerging training-error floor. We remark that neural networks of other sizes produce plots that are almost identical.

Recall that in Section \ref{sec:tv}, Theorem \ref{theorem:02} and Corollary \ref{corollary:01} imply that the TV distance between the sampler and the data distribution is bounded by $O(1/T^2)$; namely, the distance has a second-order convergence with respect to $h$. Thus, our results are consistent with the theory.

\begin{figure}[!t]
\centering
\subfloat[]{\label{fig:a}\includegraphics[width=0.6\linewidth]{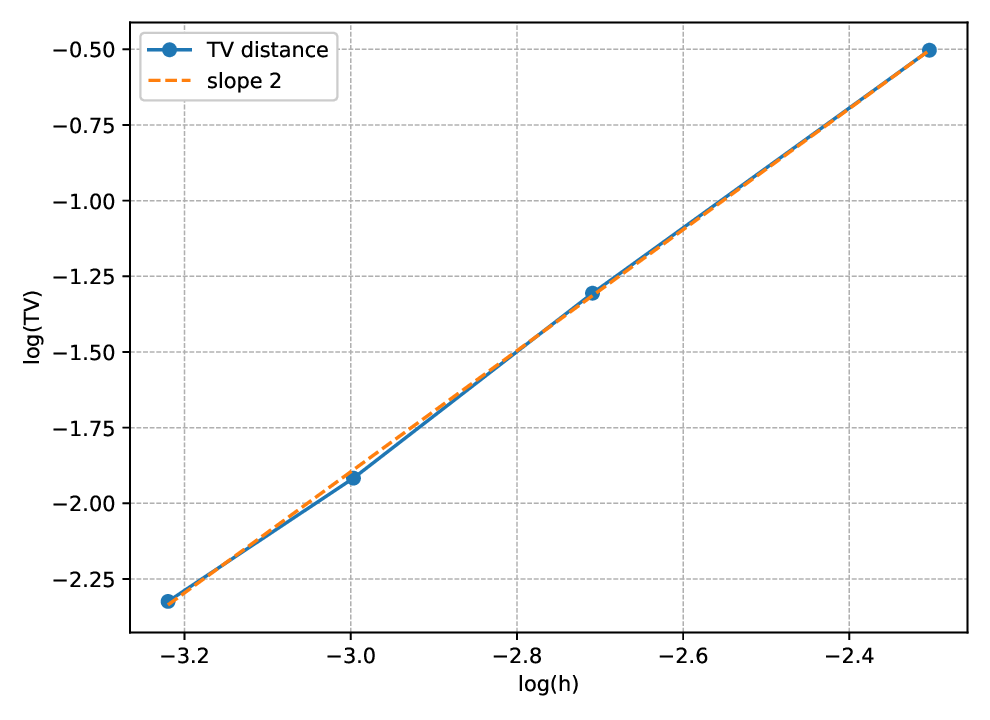}}
\\
\subfloat[]{\label{fig:b}\includegraphics[width=0.6\linewidth]{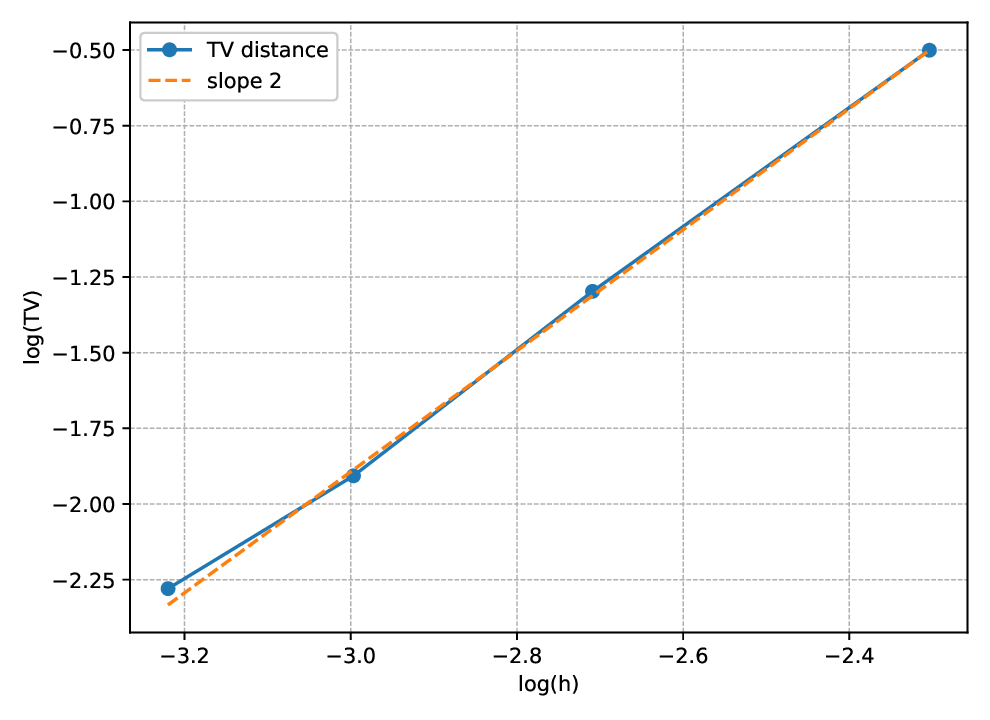}}
\caption{Log-log plots of the total variation distance versus the step size $h$ for the
Strang--Midpoint sampler: (a) Results using exact score; (b) Results using learned score (a neural network with 2 hidden layers and width 200).}
\label{fig:tv_distance}
\end{figure}

\subsection{Discussion}
The experiments provide a clean sanity check of our theoretical TV
bounds. When the score is exact, the second-order Strang–Midpoint sampler
exhibits the expected $O(h^2)$ convergence in TV distance, verifying
Theorem~3. With a realistic finite-capacity neural network, the TV
error behaves as the sum of an $O(h^2)$ discretization term and an
$h$-independent training-error term, in line with Theorem~2.
This example thus already captures the qualitative interaction between score learning and numerical integration in probability-flow ODE samplers and supports the relevance of our analysis for high-dimensional diffusion models.

\section{Conclusion}
We have proposed and analysed a simple operator–splitting sampler for probability flow ODEs in the diffusion model. By separating the PF-ODE into a linear variance-preserving part and a nonlinear score-driven part, and composing their flows with a Strang splitting combined with a midpoint Runge–Kutta step, the method achieves global second-order accuracy at the trajectory level under standard Lipschitz and moment assumptions. On top of this path-wise analysis, we derived non-asymptotic bounds in TV distance between the samples and the target data distribution. These bounds isolate the contributions from time discretization, score value error and score Jacobian error, and exhibit a genuinely second-order dependence on the step size without imposing strong structural assumptions on the data distribution or the neural network.

We complemented the theory with two-dimensional experiments, where both the exact score and the ground-truth TV distance can be computed. With the exact score, the empirical TV error decays at a rate very close to $O(h^2)$, in line with the theoretical prediction. When the score is approximated by fully connected neural networks of varying depth and width, the TV curves are shifted upward but preserve an essentially quadratic dependence on the step size until they reach a training-induced floor that matches the theoretically computed minimal loss. These results suggest that the Strang–Midpoint sampler offers a practical and theoretically transparent compromise between simplicity and accuracy, and that the error decomposition developed in this paper can serve as a useful guideline for balancing network training and numerical design in diffusion generative modeling.

\bibliographystyle{unsrt}   
\bibliography{refs}         

\end{document}